\numberwithin{equation}{section}
\numberwithin{table}{section}
\numberwithin{figure}{section}
\newcommand{\bm}{{\bf{m}}}
\newcommand{\bH}{{\bf{H}}}
\newcommand{\bM}{{\bf{M}}}
\newcommand{\bN}{{\bf{N}}}
\newcommand{\bF}{{\bf{F}}}
\newcommand{ \be}{{\bf{e}}}
\newcommand{\bCalH}{{\bf\mathcal{H}}}
\newcommand{\e}{\varepsilon}
\newtheorem{Remark}{Remark}
\newtheorem{Lemma}{Lemma}
\newtheorem{Theorem}{Theorem}
\begin{document}

\title[Multiscale Modeling in Micro Magnetism]{Temporal Upscaling in Micro Magnetism Via Heterogeneous Multiscale Methods}

\author[D.~Arjmand]{Doghonay Arjmand}
\author[M.~Poluektov]{Stefan Engblom}
\author[G.~Kreiss]{Gunilla Kreiss} 
\email{doghonay.arjmand@it.uu.se,stefane@it.uu.se,gunilla.kreiss@it.uu.se}

\address{Division of Scientific Computing \\
  Department of Information Technology \\
  Uppsala University \\
  SE-751 05 Uppsala, Sweden.}



\keywords{micro magnetism, Landau-Lifschitz equations, heterogeneous multiscale methods}


\begin{abstract}
We consider a multiscale strategy addressing the disparate scales in the Landau-Lifschitz equations in micro-magnetism.  At the microscopic scale, the dynamics of magnetic moments are driven by  a high frequency field.  
On the macroscopic scale we are interested in simulating the dynamics of the magnetisation without fully resolving the microscopic scales.


  The method follows the framework of heterogeneous multiscale methods
  and it has two main ingredients: a micro- and a macroscale model.
  The microscopic model is assumed to be known exactly whereas the
  macro model is incomplete as it lacks effective quantities. The two models use different temporal and spatial
  scales and effective parameter values for the macro model are
  computed on the fly, allowing for improved efficiency over
  traditional one-scale schemes.
  
  For the analysis, we consider a single spin under a high frequency field
  and show that effective quantities can be obtained accurately
  with step-sizes much larger than the size of the microscopic scales
  required to resolve the microscopic features. Numerical results both for a single 
  magnetic particle as well as a chain of interacting magnetic particles are given to validate the 
  theory.

\end{abstract}

\maketitle

\section{Introduction}

Suppose that we are given an ensemble of particles  $\{ i \}_{i=1}^{N}$, each of them possessing a magnetic moment represented by ${\bm}_i(t) \in \mathbb{R}^{3}$, for all $t \in [0,T]$. At a microscopic level, the dynamics of the particle $i$ is modeled by the Landau-Lifschitz (LL) equation, \cite{Landau_Lifshits_1}:
\begin{equation} \label{LLG_eqn}
\begin{array}{lll}
\dfrac{d}{dt} \bm_i  = -\beta \bm_i \times   \bH_i -  \gamma \bm_i \times \left(  \bm_i \times  \bH_i \right), \quad i=1,\ldots,N.
\end{array}
\end{equation}
The first term on the right hand side accounts for the precessional
motion of the magnetisation $\bm_i$ around a field $\bH_i$, while the
nonlinear term is responsible for damping the magnetisation toward the
field $\bH_i$. In general, the effective field $\bH_i$ includes the effects of 
different short and long range interactions. The short range terms are due to exchange interactions, 
material anisotropy, and applied external field. The exchange interaction makes the neighbouring 
particles be aligned with each other. The name material anisotropy comes from the fact that when no external field is applied, the direction of the magnetic moments would be aligned in certain directions in the crystal lattice. The long range 
terms include magnetostatic, and the magnostrictive energies. The former accounts for the interaction of the magnetic moments over 
long distances, and the latter is related to the relation between the applied stress and change in the magnetisation, see \cite{Aharoni_Book,Cimrak} for more details.  In the presence of short range interactions, $\bH_i$ is given by
$$
\bH_i  =   \sum_{j} J_{ij} \bm_j  + K_{ani}  {\bf{p}} \cdot  \bm_i  + \tilde{\bH},
$$
where $J_{ij}$ is the exchange coefficient between the particles $i$ and $j$,  
${\bf{p}}$ is the material anisotropy which is the energetically
favourable direction of magnetisation, and $\tilde{\bH}$ is an
external field interacting with the particles. The long range interactions are ignored in the present study. 
The theoretical results in this paper will cover the contribution of the external field $\tilde{\bH}$ only, but 
the method itself will be extended and tested when short range interactions are also present.




We are often interested in the dynamics of the magnetisation at scales
much larger than the size of the spatial and temporal variations at a
particle level. It is, however, computationally unaffordable to solve
the microscopic model over the entire domain to simulate
\emph{macroscopic dynamics} of the magnetisation. The
macroscopic quantities can be defined as e.g.  local averages, in time
and space, of the magnetisations of the particles. In general, it is not possible to write explicit 
equations for these local averages unless certain restrictive assumptions/approximations are made. Such
an approach, however, lacks generality as the approximation may not be
valid e.g. in the vicinity of microscopic irregularities, such as defects. To treat
microscopic irregularities, it is therefore necessary to develop
multiscale strategies that can iterate back and forth between an
accurate microscopic and a computationally efficient macroscopic
model. The microscopic model is expensive and, therefore, should be
used only when necessary.

There are two common types of multiscale strategies to couple the
disparate scales in multiscale problems. The first approach is the
domain partitioning strategy which implies an explicit interface
between mathematical laws valid at different scales. The other
alternative is to use methods based on upscaling, where a macroscopic
model is assumed everywhere and the microscopic information is
upscaled to the macro model only in a small part of the domain. Meanwhile, the macro and the micro models exchange 
information in a consistent manner. Schematics of these approaches are
given in Figure \ref{Fig_MultiscaleApproaches}. There are a large
number of related literature on coupling different length scales in
magnetism. Without being exhaustive we refer the curious reader to
\cite{Banas_Prohl_etal_Book,Bruckner_etal,Chen_Cervera_Yang,Dobrovitski_Katnelson_Harmon,Grinstein_Koch_1,Jourdan_etal_2008,Poluektov_Eriksson_Kreiss,Tsiantos_etal}.

 \begin{figure}[h] 
    \centering
        \includegraphics[width=0.47\textwidth]{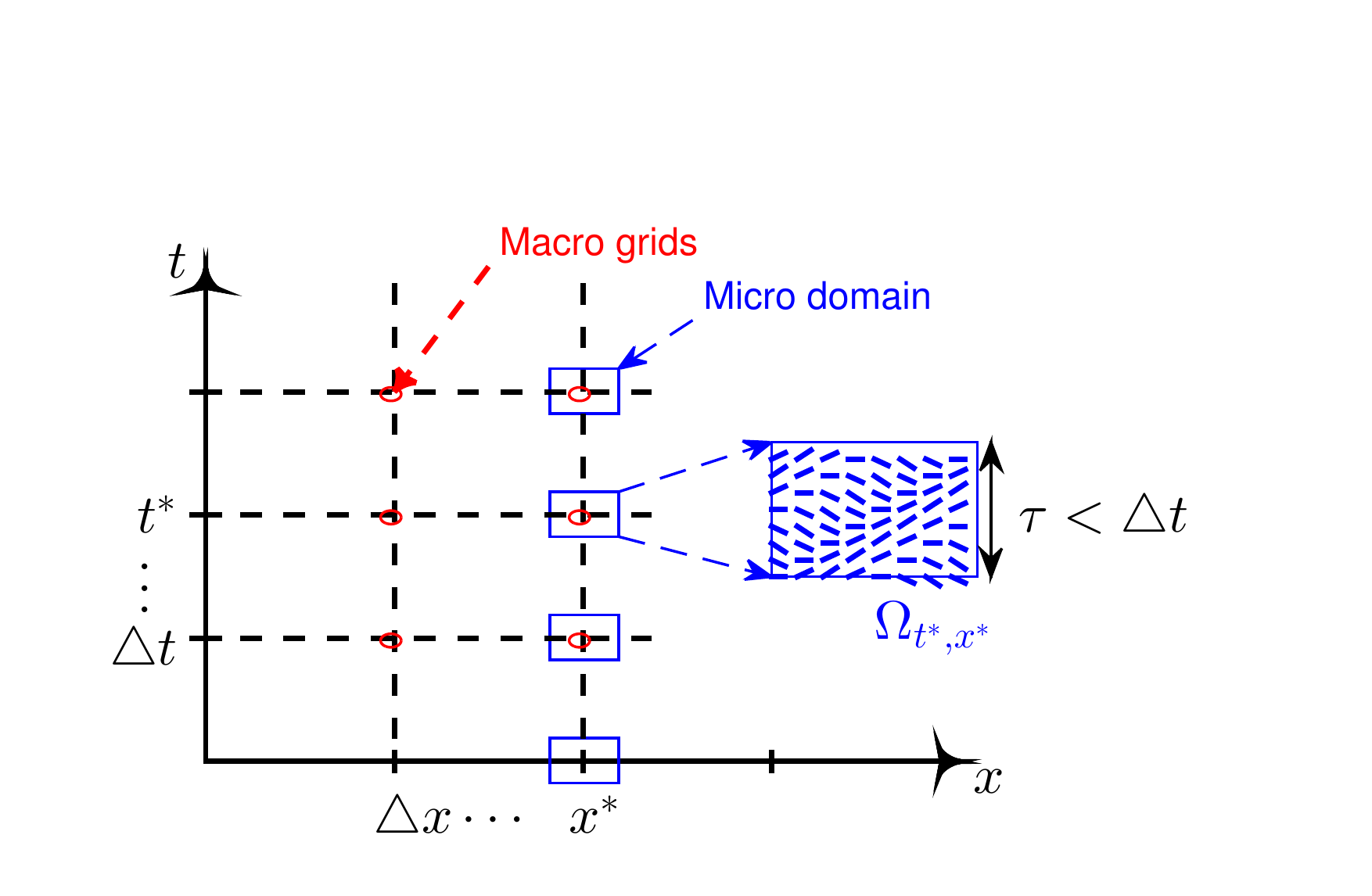}
        \includegraphics[width=0.47\textwidth]{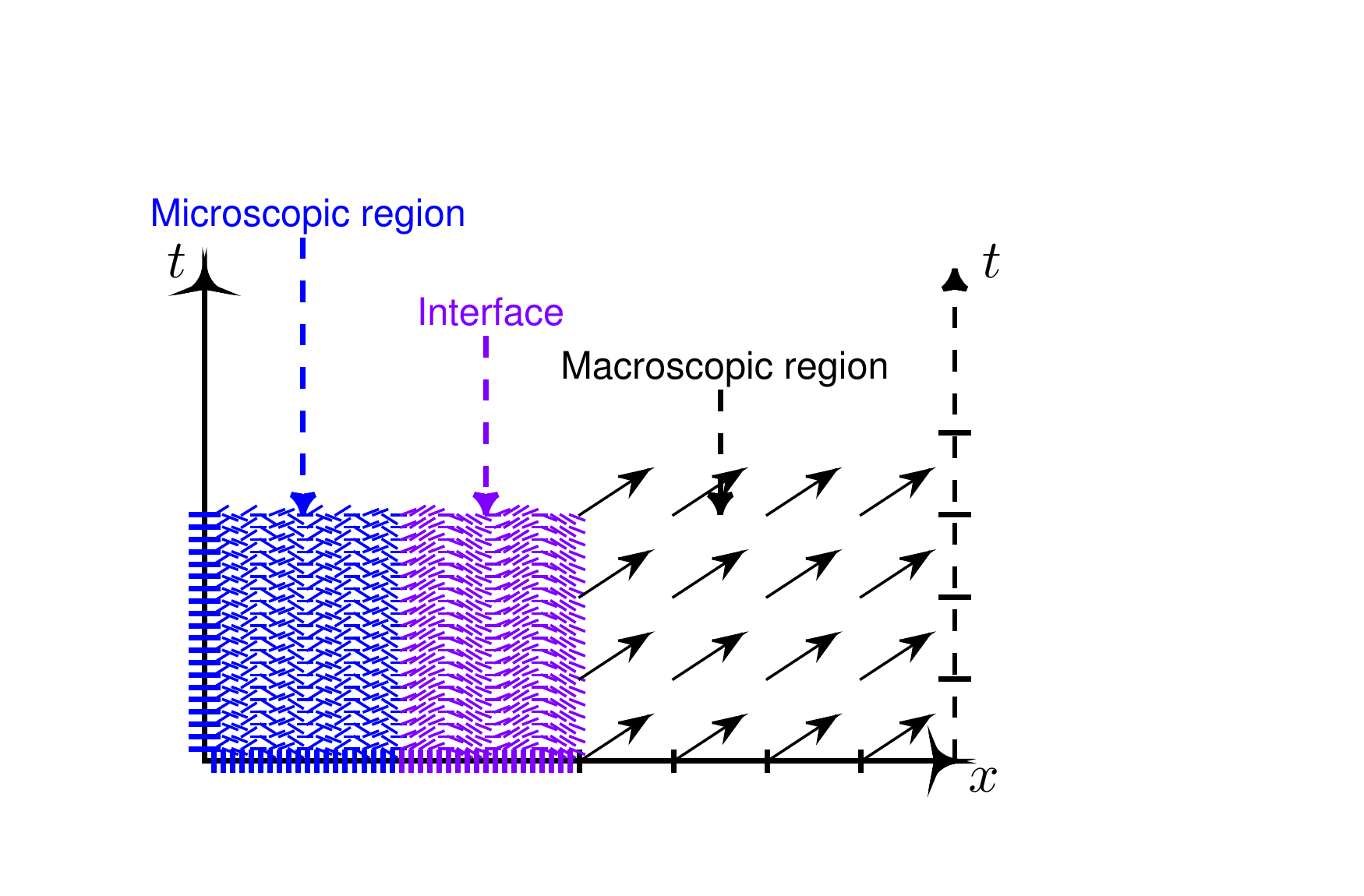}
    \caption{(Left) A multiscale strategy based on upscaling: The macroscopic model lacks some information in the center of the micro boxes, e.g. at $(t^{*},x^{*})$. The missing effective parameter values are then computed by carrying out local, in time and space, microscopic simulations in small boxes, e.g. $\Omega_{t^{*},x^{*}}$. The micro problems should be synchronised with the corresponding coarse scale data and the effective quantities are computed and upscaled from the micro to the macro level. (Right) A multiscale strategy based on domain partitioning: Different mathematical laws are valid at different regions. The macroscopic region does not see the small scale variations in the microscopic domain. These two regions are connected by an interface. The interface acts as a transition region and conditions are often imposed to ensure the consistency between the microscopic and the macroscopic quantities.}
                \label{Fig_MultiscaleApproaches}
\end{figure}

In this article, as a first step towards designing general multiscale
methods such as the upscaling strategy illustrated in Figure
\ref{Fig_MultiscaleApproaches} (the left schematic), we propose and
analyse an upscaling strategy based on heterogeneous multiscale
methods (HMM) \cite{E_Engquist_1}, to couple disparate scales
in the LL equations. The algorithm assumes a macro model where some
data in the model are missing. These data are then computed and
upscaled by carrying out simulations in localised, in time and space,
microscopic domains. The analysis part of this paper is limited to temporal
upscaling only, and therefore the theoretical setting consists of a case where 
the effective field $\bH$ includes only the influence of a time-dependent external field $\tilde{\bH}$ and
no particle interactions are involved. Since we do not have any interaction, such
as exchange, among the particles, we regard all particles as being
identical and rewrite the LL equation \eqref{LLG_eqn} as
\begin{equation} \label{LLG1_eqn}
\begin{array}{lll}
\dfrac{d}{dt} \bm^{\e}(t)  = -\beta \bm^{\e} \times   \bH^{\e} -  \gamma \bm^{\e} \times \left(  \bm^{\e} \times  \bH^{\e} \right). 
\end{array}
\end{equation}

In this setting the dynamics  is driven
only by an external field $\bH^{\e}(t) = \bH(t,t/\e)$, which may possess slow and fast
variations at the same time. The parameter $\e \ll 1$ denotes the
wave-length of the rapid variations. The high frequency external field is a realistic assumption; relevant for applications 
in ferromagnetic resonance, see e.g. \cite{Kittel_1}.  For the analysis, we will assume
that $\bH(t,\cdot)$ is a smooth and periodic function. Note that the above simplification of the effective field $\bH$ is 
only for the analysis, and numerical results are also provided for the case when the exchange interactions are present 
in the model problem.

The paper is structured as follows. In Section
\ref{Sec_Preliminaries}, we introduce the notation and preliminary
results which will be used later in the analysis.  In Section
\ref{Sec_MultiscaleMethods}, we give a general overview of HMM for
multiscale ordinary differential equations (ODEs) and we also present
our multiscale algorithms for a single magnetic particle as well as a chain of interacting magnetic particles. Section \ref{Analysis_Sec} is devoted to the
analysis of the method where a fully discrete estimate for the
difference between the HMM solution and the exact local average is
given. Finally, we conclude our paper by presenting numerical examples corroborating our theoretical statements.


\section{Preliminaries} 
\label{Sec_Preliminaries}
\subsection{Notation}

We will use calligraphic fonts, e.g. ${\bf{\mathcal{H}}}(t)$, to
denote matrices or matrix functions. Vector functions are represented
by a bold face letter, e.g. ${\bf{m}}(t)$. Macroscopic and microscopic
step-sizes will be denoted by $\triangle t$ and $\delta t$,
respectively. The space of functions with bounded total variations in $\mathbb{R}$ is written
as $BV(\mathbb{R})$. The Euclidian $2$-norm is written as $\left| \bm
\right|_2 = \sqrt{m_1^2 + m_2^2 + m_3^2}$ and we denote the
corresponding inner-product by $\langle {\bf{u}},{\bf{v}} \rangle =
\sum_{i=1}^{d} {\bf{u}}_i {\bf{v}}_i$. Unless otherwise specified, $|
\cdot |$ is to mean $|\cdot|_2$. Throughout the paper, the constant
$C = O(1)$ is a generic constant whose value may change in subsequent
occurences.

\subsection{Utility results}

The upscaling strategy that is summarised in the next section is
heavily based on the notion of local averages. Assume that we have an
oscillatory function $f^{\e}(t):=f(t/\e)$ for $t \in [0,T]$, where $f \in L^{\infty}([0,1])$
is a $1$-periodic function and $\e \ll 1$. The function $f(t/\e)$ is
then $\e$-periodic. To compute the local average we introduce an
interval $I_{\tau}:=[-\tau/2,\tau/2]$, where $\tau > \e$. A naive
local averaging approach is to integrate the function $f^{\e}$ over
the interval $I_{\tau}$. The following estimate for the error of this
naive averaging can easily be shown.
\begin{equation} \label{Naive_Averaging_eqn}
\left| \dfrac{1}{\tau} \int_{-\tau/2}^{\tau/2} f(s/\e) \; ds - \int_{0}^{1} f(s) \; ds \right|  \leq C \dfrac{\e}{\tau}.
\end{equation}
From this inequality it is clear that for the error to be small, the condition $\tau > \e$ should hold.
In this paper, $\tau$ stands for the size of the microscopic
simulation box in time and it should practically be chosen of the same order as
$\varepsilon$, but the theory is not limited by this assumption. The error in \eqref{Naive_Averaging_eqn} can be
very large when $\tau = O(\e)$. To reduce this error, we introduce the
space of kernels $ \mathbb{K}^{p,q}$ and we say that $K \in
\mathbb{K}^{p,q}$ if $K$ has a compact support in $[-1/2,1/2]$ and
\begin{itemize}
\item $K(t) = K(-t)$
\item $K^{(q+1)} \in BV(\mathbb{R})$ 
\item $K$ has $p$ vanishing moments:
\begin{equation*}
\int_{\mathbb{R}} K(t) t^r dt = \begin{cases}
1 & r=0, \\
0, & 0< r \leq p.
\end{cases}
\end{equation*}
\end{itemize}

Note that $K^{(q+1)}$ is the $(q+1)th$ weak derivative and that a
constant kernel belongs to $K^{1,-1}$, see also Figure \ref{Fig_KernelExample} for a few other examples. To localise the kernel we
introduce a scaled version $K_{\tau}(t) := 1/\tau K(t/\tau)$, with $K
\in \mathbb{K}^{p,q}$. The local averages can then be computed by
taking a weighted average of the function $f^{\e}$ with $K_{\tau}$
over the domain $I_{\tau}$.
The following lemma from \cite{Arjmand_Runborg_1} (see also \cite{Arjmand_Runborg_2} for numerical experiments) shows how the convergence rate in \eqref{Naive_Averaging_eqn} improves upon using such averaging kernels. 
\begin{Lemma}[Lemma 2.3 in \cite{Arjmand_Runborg_1}] \label{Averaging_Lemma} Let $f \in L^{\infty}([0,1])$ be a $1$-periodic function and $K \in \mathbb{K}^{p,q}$. Then with $\bar{f} = \int_0^1 f(s) ds$ we have
$$
\left| \int_{-\tau/2}^{\tau/2} K_{\tau}(s) f(s/\e) \; ds - \bar{f} \right| \leq C \left|  f \right|_{\infty}  \left( \dfrac{\e}{\tau}  \right)^{q+2},
$$
where the constant $C>0$ is independent of $\e$ and $\tau$ but may depend on $K,f,p$ or $q$.
\end{Lemma}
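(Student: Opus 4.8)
The plan is to exploit periodicity and the vanishing-moment structure of the kernel via repeated integration by parts. First I would write $f(s) = \bar f + g(s)$ where $g$ has zero mean, and let $G$ be the $1$-periodic antiderivative of $g$ with $\int_0^1 G = 0$; iterating, let $G_1 = G$, $G_2' = G_1$, \dots, $G_{q+2}' = G_{q+1}$, each chosen $1$-periodic with zero mean, so that all the $G_j$ are bounded by a constant times $|f|_\infty$ (independent of $\e,\tau$). Since $\int_{\mathbb R} K_\tau(s)\,ds = 1$, the quantity to estimate is $\int K_\tau(s)\, g(s/\e)\,ds$. Because $\tfrac{d}{ds} G_1(s/\e) = \tfrac1\e g(s/\e)$, one integration by parts gives $\int K_\tau(s) g(s/\e)\,ds = -\e \int K_\tau'(s) G_1(s/\e)\,ds$; the boundary terms vanish because $K$ (hence $K_\tau$ and all derivatives through order $q$) has compact support in $[-\tau/2,\tau/2]$.

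The second step is to iterate this $q+2$ times in total, producing
\[
\int_{-\tau/2}^{\tau/2} K_\tau(s) g(s/\e)\,ds = (-\e)^{q+1} \int_{-\tau/2}^{\tau/2} K_\tau^{(q+1)}(s)\, G_{q+1}(s/\e)\,ds,
\]
and then one more step using $K^{(q+1)} \in BV(\mathbb R)$: since $K_\tau^{(q+1)}$ has bounded variation we can integrate by parts once more against the measure $dK_\tau^{(q+1)}$, yielding a bound $\e^{q+2} \,\|G_{q+2}\|_\infty \int |dK_\tau^{(q+1)}|$. The third step is to track the $\tau$-scaling: since $K_\tau(s) = \tfrac1\tau K(s/\tau)$, we have $K_\tau^{(q+1)}(s) = \tau^{-(q+2)} K^{(q+1)}(s/\tau)$, so $\int_{\mathbb R} |dK_\tau^{(q+1)}| = \tau^{-(q+2)} \operatorname{TV}(K^{(q+1)})$. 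Combining, the error is bounded by $C\, |f|_\infty\, (\e/\tau)^{q+2}$ with $C$ depending only on $\operatorname{TV}(K^{(q+1)})$ and the normalization constants of the $G_j$ — i.e. on $K$ and $f$ but not on $\e,\tau$.

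One point deserves care: the $p$ vanishing moments of $K$ are not actually needed for this particular estimate (they matter for averaging functions that are $\e$-periodic only up to lower-order slow modulation, or for killing polynomial-in-$s$ factors), so I would either note that $p\ge 1$ is used only to guarantee membership in a nonempty kernel class, or remark that the estimate in fact holds for any symmetric compactly supported $K$ with $\int K = 1$ and $K^{(q+1)} \in BV$. The main obstacle is bookkeeping rather than conceptual: one must verify that each antiderivative $G_j$ can be chosen $1$-periodic (this uses $\int_0^1 g = 0$ at the first step, and then $\int_0^1 G_j = 0$ is imposed by choice of constant at each subsequent step) and uniformly bounded in terms of $|f|_\infty$, and that the boundary terms in every integration by parts vanish — which is immediate from $\operatorname{supp} K_\tau \subset [-\tau/2,\tau/2]$ for the first $q+1$ steps and from absolute continuity of $G_{q+2}$ together with vanishing of $K_\tau^{(q+1)}$ at the endpoints for the last step. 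Since this is exactly Lemma 2.3 of \cite{Arjmand_Runborg_1}, I would ultimately just cite that proof, but the sketch above is how I would reconstruct it.
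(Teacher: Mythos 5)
Your proof is correct, and since the paper does not reprove this lemma but simply cites Lemma 2.3 of \cite{Arjmand_Runborg_1}, your argument --- splitting off the mean, integrating by parts $q+1$ times against periodic zero-mean antiderivatives, and using $K^{(q+1)}\in BV$ with the scaling $K_\tau^{(q+1)}(s)=\tau^{-(q+2)}K^{(q+1)}(s/\tau)$ for the final step --- is essentially the standard proof underlying that citation. Your side remark that the $p$ vanishing moments are not needed for this purely periodic estimate is also accurate; they enter only when the averaged function carries slow dependence.
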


In Figure \ref{Fig_KernelExample}, three examples of kernels with different $(p,q)$ pairings are given.

\begin{figure}[h]
    \centering
      \includegraphics[width=0.75\textwidth]{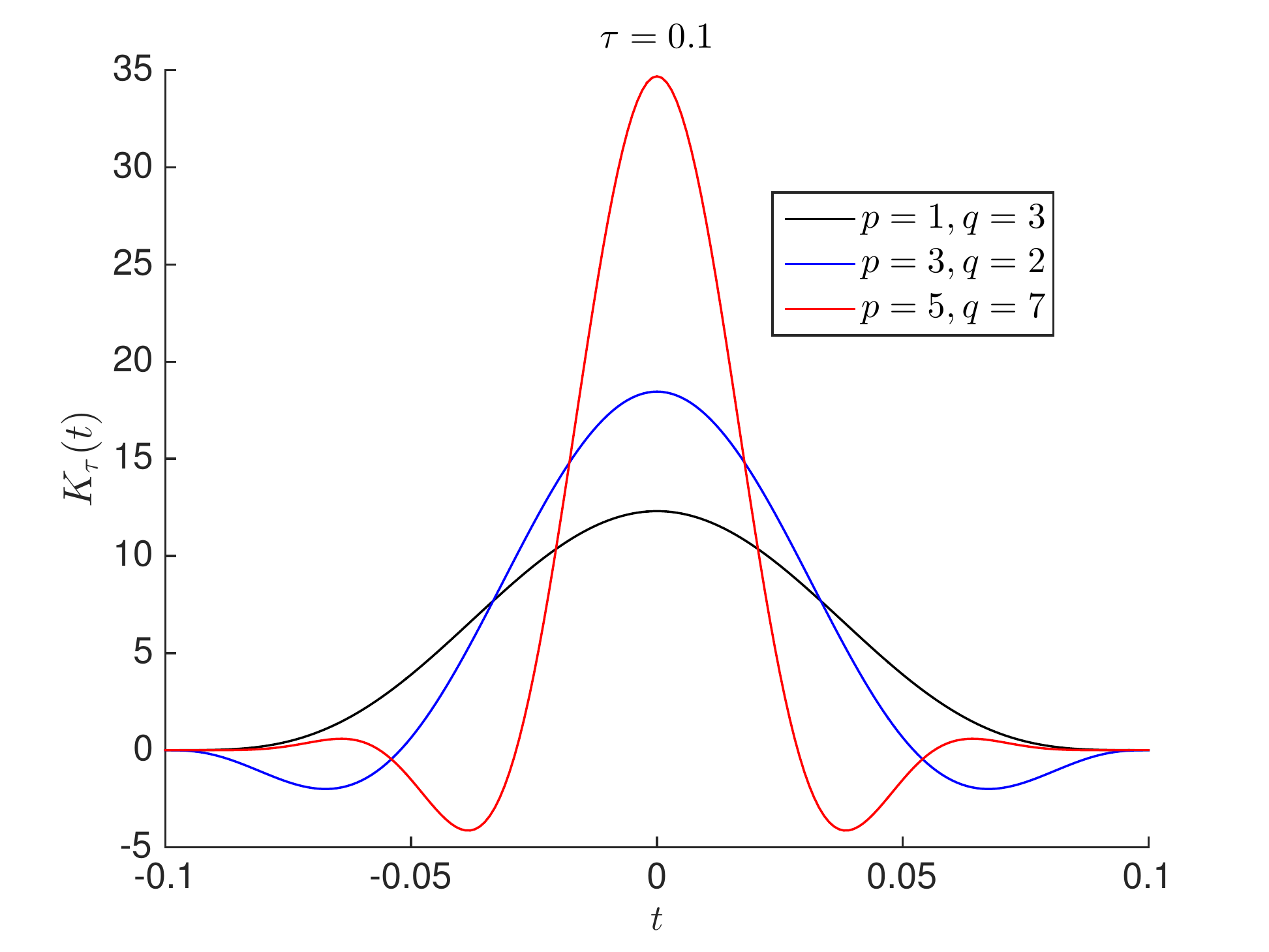}
     \caption{Scaled kernels $K_{\tau}$ with $\tau = 0.1$, and different $p,q$ values are depicted.}
        \label{Fig_KernelExample}
\end{figure}


A well-known property of the LL equation \eqref{LLG_eqn} is the conservation of the length of $\bm$. In other words, if $| \bm(0)|_{2} = 1$, the vector $\bm(t)$ will stay on the unit sphere $\mathbb{S}^{2}$ in $\mathbb{R}^{3}$ for all $t \in [0,T]$ . This can be seen by taking the inner-product of \eqref{LLG_eqn} by $\bm(t)$. From  a numerical point of view, it is then desirable to use discretisation schemes that respect this property. We refer to \cite{Cimrak,Cervera2007} for a survey of common time stepping schemes for the LL equation, to \cite{Prohl_Book} for an analytical treatment of general computational approaches and to \cite{Kruzik_Prohl} for the treatment of relaxation based models used in stationary micromagnetism. One commonly used time-stepping scheme that preserves the magnetisation amplitude along with other invariants such as the Hamiltonian and the Lyapunov structures is the midpoint rule, see \cite{Daquino_etal} for the analysis of the midpoint rule and \cite{McLachlan_Modin_Olivier} for a variant of it. The midpoint rule for \eqref{LLG_eqn} is 
$$
\bm_{n+1} = \bm_{n}  - \beta \triangle t  \bm_{n+1/2} \times \bH_{n+1/2} - \gamma \triangle t \bm_{n+1/2} \times \left( \bm_{n+1/2} \times \bH_{n+1/2}  \right),
$$
where ${\bf{u}}_{n+1/2} = ({\bf{u}}_n + {\bf{u}}_{n+1})/2$ for ${\bf{u}} = \bm$ or ${\bf{u}} = \bH$. In this paper, we will use the mid-point rule as our choice of temporal discretisation for the LL equation. However, we emphasise that the multiscale method that we describe in the next section can easily be modified for any integration method with no conceptual change in the algorithm.




\section{The multiscale method}
\label{Sec_MultiscaleMethods}
\subsection{Heterogeneous multiscale methods}

HMM was originally proposed by E and Engquist \cite{E_Engquist_1} as a
general framework for designing multiscale algorithms for coupling
mathematical models at different scales. For a recent account of
applications of HMM in various fields, see the survey article
\cite{Abdulle_E_etal}. Here we refer to
\cite{Engquist_Tsai_1,Ariel_Engquist_Tsai_1,Ariel_Engquist_Tsai_2,Ariel_etal,E_1}
for the analysis and developments in the context of ordinary
differential equations (ODEs) with multiple scales. For the analysis
of HMM in the stochastic setting, see
e.g. \cite{VandenEijden_1,E_Liu_VandenEijnden_1}. Before proceeding
further to the presentation of our multiscale method, we present the
general idea behind HMM. For this, we introduce
\begin{equation*}
\left( \mathcal{K} \bm^{\e} \right)(t_a) := \int K_{\tau}(t - t_a) \bm^{\e}(t) \; dt.
\end{equation*}
We can then see that $\mathcal{K} \partial_t  = \partial_t \mathcal{K}$. Next, we apply the operator $\mathcal{K}$ to both sides of the equation $\eqref{LLG1_eqn}$ and see that, with $\bM(t) :=  \left( \mathcal{K} \bm^{\e} \right)(t) $,
\begin{equation*}
\dfrac{d}{dt} \bM(t)  = -\beta \left( \mathcal{K}  \left( \bm^{\e} \times \bH^{\e} \right)\right)(t)  - \gamma \left( \mathcal{K} \left(  \bm^{\e} \times \left(  \bm^{\e} \times \bH^{\e}\right) \right) \right) (t).
\end{equation*}
The macro variable $\bM$ is smooth since by definition the fluctuations in $\bm^{\e}$ are filtered out. Moreover, the right hand side is given by a local average of the original LL which oscillates around and damps towards the external field $\bH^{\e}$. This suggests that we may assume a macro model of the form:
\begin{equation}\label{MacroModel_HMM_eqn}
\dfrac{d}{dt} \bM(t)  = \bF(t,\bM),
\end{equation}
where the term $\bF$ includes a local average of the time derivative $\frac{d}{dt} \bm^{\e}$, appearing in \eqref{LLG1_eqn}. In the next section, we propose a somewhat similar strategy where the damping term is modelled explicitly in the macro model. We will show later, by analytical and numerical means, that the proposed strategy captures the right macroscopic dynamics, and we quantify the error that arises from a multiscale coupling.


\subsection{HMM for a single particle}
\label{Sec:HMM_SingleSpin}
 The effective equation on the macro level assumes the following form:
\begin{equation}\label{Macro_Problem_eqn}
\text{Macro Model: } 
\left\{
\begin{array}{ll}
\dfrac{d}{dt} \bM(t) = \bF(t,\bM) +  \dfrac{\gamma}{\beta} \bM \times \bF(t,\bM), \quad t \in (0,T], \\
 \bM(0) = \bm^{\e}(0),
 \end{array}
 \right.
\end{equation}
where $\bF$ is the missing data in the model. Note that, unlike the
model \eqref{MacroModel_HMM_eqn}, the macro model
\eqref{Macro_Problem_eqn} includes also a damping term . As the
macro-solver, we use the midpoint rule due to the mentioned
conservation properties it possesses. In principle, the HMM introduced in this section can be combined with any time-stepping scheme with no change in the macroscopic and microscopic models. In this paper, we will carry out the analysis for the implicit midpoint rule. The macro-solver uses a macro
step-size $\triangle t$ and reads as
\begin{equation} \label{Macro_Solver_eqn}
\text{Macro Solver: }
\left\{
\begin{array}{ll}
\bM_{n+1} = \bM_n  + \triangle t \bF_{n+1/2} + \dfrac{\gamma}{\beta} \triangle t \bM_{n+1/2} \times \bF_{n+1/2}, \\
\bM_0 = \bm^{\e}(0),
\end{array}
\right.
\end{equation}
where $\bF_{n+1/2}:=\bF\left(  t_{n+1/2} ,  \bM_{n+1/2} \right)$,  $t_{n+1/2}:= (t_n + t_{n+1})/2$ and $\bM_{n+1/2} = (\bM_n  + \bM_{n+1})/2$. To close the system one needs to compute the right hand side $\bF(t_a,\bM)$. To achieve this we first solve the micro model:
\begin{equation} \label{Micro_Problem_eqn}
\text{Micro Model: }
\begin{array}{lll}
\dfrac{d}{dt} \bm^{\e}(t)  = -\beta \bm^{\e}(t) \times   \bH^{\e}(t),  \quad t \in I_{\tau}^{\pm},\\
\bm^{\e}(t_a)  = \bM,
\end{array}
\end{equation}
where $I_{\tau}^{+}:= (t_a,t_a + \tau/2]$ and $I_{\tau}^{-}:=[-\tau/2 + t_a, t_a)$. Then, $\bF(t_a,\bM)$ can be computed by:
\begin{equation} \label{Upscaling_eqn}
\text{Upscaling: } \bF(t_a,\bM) = \left( \mathcal{K}  \dfrac{d}{dt} \bm^{\e} \right)(t_a):=\int_{t_a-\tau/2}^{t_a+\tau/2} K_{\tau}\left( s-t_a \right) \dfrac{d}{ds} \bm^{\e}(s) \; ds.
\end{equation}

In practice we choose $\tau = O(\e)$ (but $\tau > \e$) and therefore the computational cost of solving the micro model \eqref{Micro_Problem_eqn} is independent of the small scale parameter $\e$. Nevertheless, the theory and the method itself are not restricted by this assumption. Note that the initial data of the micro problem is given by the coarse scale solutions. This is necessary for synchronising the micro-problem with the current macroscopic quantities. Moreover, the macro stepsize is chosen larger than the size of the microscopic simulation box, i.e., $\tau < \triangle t$, so that the overall cost of the multiscale method is cheaper than the cost of a direct numerical approximation of the full multiscale problem. 
\begin{Remark}
We emphasize that for computation of the flux \eqref{Upscaling_eqn}, the micro problem must be solved forward and backward locally in time. 
\end{Remark}
The micro model does not include the damping term. This does not cause any big change in the dynamics of the macro-scale variable $\bM$ owing  to the fact that the solution $\bm^{\e}$ of the micro model \eqref{Micro_Problem_eqn} with or without damping would precess around the field $\bH^{\e}$ in both cases and the high frequency variations are filtered out in the upscaling step \eqref{Upscaling_eqn}. It is possible to include the damping term in the micro problem but this requires removing the damping in the macroscopic model \eqref{Macro_Problem_eqn}. It is also possible to carry out the local averaging in the upscaling \eqref{Upscaling_eqn} locally forward in time instead, i.e., in the interval $[0,\tau]$. Such a procedure will result in a qualitatively similar macroscopic  solution but may lead to a significant deterioration in the order of convergence. The main advantage with using the conservative micro model \eqref{Micro_Problem_eqn} is that the nonlinearity is treated only in the macroscopic equation \eqref{Macro_Problem_eqn} which leads to significant simplifications in relation with the convergence analysis for the upscaling step and the numerical computation of the micro solution. Note that the macro-solver is also nonlinear, in $\bM_{n+1}$, due to our choice of discretisation.
\begin{Remark}
The micro problem \eqref{Micro_Problem_eqn} can be posed instead over the domain $(0,\tau/2]$ by shifting the argument of the field $\bH^{\e}$ by $t_a$:
 \begin{equation} \label{Micro_Problem1_eqn}
\begin{array}{lll}
\dfrac{d}{dt} \bm^{\e}(t)  = -\beta \bm^{\e}(t) \times   \bH^{\e}(t + t_a),  \quad t \in \tilde{I}_{\tau}^{\pm}\\
\bm^{\e}(0)  = \bM, 
\end{array}
\end{equation}
where $\tilde{I}_{\tau}^{+}:=(0,\tau/2]$ and $\tilde{I}_{\tau}^{-}:=(-\tau/2,0]$. Accordingly, the flux \eqref{Upscaling_eqn} can be rewritten as
\begin{equation} \label{Upscaling1_eqn}
\bF(t_a,\bM) = \left( \mathcal{K}  \dfrac{d}{dt} \bm^{\e} \right)(0):=\int_{-\tau/2}^{\tau/2} K_{\tau}\left( s \right) \dfrac{d}{ds} \bm^{\e}(s) \; ds.
\end{equation}

\end{Remark}

\subsection{Extension of HMM to a chain of interacting magnetic particles }
\label{Sec:HMMChain}
In this section, the HMM algorithm described in Section \ref{Sec:HMM_SingleSpin} is adapted for a system of interacting particles. It is assumed that 
the magnetic particles are sitting on a number of equidistant discrete points $\{ x_i  = i \delta x \}_{i=0}^{(r+\ell)L}$, where $\delta x$ represents the distance between two adjacent particles, $r \in \mathbb{Z}^{+}$ and $\ell \in \mathbb{N}$ are two integers (whose roles will be made clear), and $N = (r+\ell)L$ is the total number of particles. To have a well-defined problem setting, periodic boundary condition is assumed for the chain. The full microscopic equation reads as 
\begin{equation} \label{FullMicro_eqn}
\dfrac{d \bm^{\e}_i}{dt}  = - \beta \bm^{\e}_i \times  \bH^{\e}_i - \gamma \bm_i^{\e} \times \left(  \bm_i^{\e} \times \bH^{\e}_i  \right), \quad i=1,\ldots,N,
\end{equation}
where $ \bH^{\e}_i =  \left(   \sum_{j}  J_{ij} \bm_{j}^{\e}    + \tilde{\bH}^{\e}_i(t) \right)$ and $\tilde{\bH}^{\e}_i(t)$ is a high-frequency external field. The index $i$ in $\tilde{\bH}_i^{\e}$ is to allow for spatially nonuniform external fields. The macroscopic model (to be introduced soon) is built on a coarse mesh, whereby the magnetisation on every coarse grid is defined as the local average of $(2 r + 1)$ microscopic magnetic moments.  For this, we first define a coarse grid $\{ X_{I}  = I (r+ \ell)  \delta x\}_{I=0}^{L}$ with $L \ll N$, which implies a far less degrees of freedom in comparison to the full microscopic equation \eqref{FullMicro_eqn}. The macroscopic magnetisation $\bM_I$ at the point $X_I$ is defined as 

\begin{align} \label{DefnMacroM_eqn}
\bM_I(t) = \sum_{j=-r}^{r} K_{\eta}(x_{I(r+ \ell)+j}   -   x_{I(r+\ell)})  \left(  K_{\tau} \ast \bm^{\e}_{I(r+\ell)+j} \right)(t)=:  \left( \mathcal{K}_{\tau,\eta} \ast \bm^{\e} \right)(X_I,t), 
\end{align}
where $\eta = 2 r \delta x$ is the size of the local spatial averaging domain and $\tau$ is that of a temporal averaging. From formula \eqref{DefnMacroM_eqn}, it is evident that between two consecutive macroscopic points, a total number of $\ell$ magnetic moments are skipped while averaging. The macromodel is similar to \eqref{Macro_Problem_eqn} and takes the form

\begin{equation}\label{MacroProblemChain_eqn}
\text{Macro Model: } 
\left\{
\begin{array}{ll}
\dfrac{d}{dt} \bM_I(t) = \bF_I(t,\bM_{\tilde{I}}) +  \dfrac{\gamma}{\beta} \bM \times \bF_I(t,\bM_{\tilde{I}}), \quad t \in (0,T], \\
 \bM_I(0) = \left( \mathcal{K}_{\tau,\eta} \ast \bm^{\e}\right)(X_I, 0),
 \end{array}
 \right.
\end{equation}
where $\tilde{I} = \{I-1,I,I+1 \}$. To close the macro problem,  $\bF_I(t_a,\bM_{\tilde{I}})$ needs to be computed. To do this, we first solve the micro problem

\begin{equation} \label{MicroProblemChain_eqn}
\text{Micro Model:}
\left\{
\begin{array}{lll}
\dfrac{d}{dt} \bm^{\e}_{Ir^{\prime}  + j} (t)  = -\beta \bm^{\e}_{I r^{\prime}  + j} \times \bH^{\e}_{Ir^{\prime} + j}(t), \quad t \in I^{\pm}_{\tau}, \quad j =-r,\ldots,r \\
\bm^{\e}_{Ir^{\prime} + j}(t_a)  = \hat{\bM}(x_{Ir^{\prime} + j}) \\
\bm^{\e}_{Ir^{\prime} - r}   = \hat{\bM}(x_{Ir^{\prime}  - r}), \quad \bm^{\e}_{Ir^{\prime} + r}   = \hat{\bM}(x_{Ir^{\prime}  + r}),
\end{array}
\right.
\end{equation}
where $I^{\pm}_{\tau}$ is given by \eqref{Micro_Problem_eqn}, $r^{\prime} = r + \ell$, and $\hat{\bM} = \pi_2 \bM/|\pi_2 \bM |$ is obtained by a normalised second order polynomial interpolation of the macroscopic solutions $\bM_{\tilde{I}}$ for $\tilde{I} = I-1,I,I+1$. The last step is to upscale the quantity $\bF_I(t_a,\bM_{\tilde{I}})$ by
\begin{equation}\label{UpscalingChain_eqn}
\text{Upscaling:} \quad   \bF_{I}(t_a,\bM_{\tilde{I}})  =  \left(  \mathcal{K}_{\tau,\eta} \ast  \dfrac{d}{dt} \bm^{\e}  \right)\left( X_I,t_a \right). 
\end{equation}
\begin{Remark} Note that the micro problems \eqref{MicroProblemChain_eqn} are independent from each other, and hence they can be solved in parallel. 
\end{Remark}
\begin{Remark} In the formulation of the micro problem \eqref{MicroProblemChain_eqn}, the boundary conditions are obtained from the macroscopic solutions. When the micro solution has high frequency variations in space, the high frequencies which are reflected from the boundary of the micro problem may pollute the interior solution. In this scenario, the micro problem may be modified to also include a damping layer near the boundary, whereby high frequency components in the solution are damped out. Such a strategy is developed in \cite{Poluektov_Eriksson_Kreiss} in the setting of domain partitioning type multiscale methods, but the HMM algorithm, proposed here, can be easily modified to include this damping layer too, if necessary. 
\end{Remark}


\section{Analysis}
\label{Analysis_Sec}
For the analysis, we consider a single magnetic particle, and without loss of generality we assume that $\beta=1$. The LL equation with $\beta=1$ can be written as
\begin{equation} \label{LLG_Beta1_eqn}
\dfrac{d}{dt} \bm^{\e}(t) = -\mathcal{H}^{\e}(t) \bm^{\e}(t) - \gamma \bm^{\e}(t) \times \left( \bCalH^{\e}(t) \bm^{\e}(t) \right),
\end{equation}
where $\bCalH^{\e}(t) \in \mathbb{R}^{3 \times 3}$ for all $t \in [0,T]$ is a skew-symmetric matrix function given by
\begin{equation*}
\mathcal{H}^{\e}(t)=  \left[ \begin{array}{ccc}
0 & H_3^{\e}(t) & -H_2^{\e}(t) \\
-H_3^{\e}(t) & 0 & H_1^{\e}(t) \\
H_2^{\e}(t) & -H_1^{\e}(t) & 0 \end{array} \right]. 
\end{equation*}

For the analysis, we assume that $\bCalH^{\e}(t) = \bCalH(t,t/\e)$, where the field $\bCalH(t,\cdot)$ is $1$-periodic and $\bCalH_{ij} \in C^{\infty}([0,T]\times[0,1])$, so that we can safely assume that for all ${\bf{v}} \in \mathbb{R}^3$, and $k = 0,1$, the following bounds hold
\begin{equation}\label{Assumptions_eqn}
\left| \partial_t^{k} \bCalH(t,s)  {\bf{v}}  \right|_2 \leq c_1 \left|   {\bf{v}}  \right|_2,  \quad \forall t \in [0,T] \text{ and } s \in \mathbb{R}.
\end{equation}
By periodicity we can write
$$
\mathcal{H}^{\e}(t + t_a) = \mathcal{H}(t+t_a , t/\e + r ), \text{ where } r = \{  t_a/\e  \},
$$
and $\{ z \}$ represents the fractional part of $z$. To simplify the notation, we will write $\mathcal{H}_{t_a,r}(t , t/\e) :=\mathcal{H}(t+t_a , t/\e + r )$. 

In practice, in the ODE setting, it is often the case that
$\bCalH(t,t/\e) = \bCalH_{0} + \bCalH_1(t/\e)$ where $\bCalH_{0}$ is
constant and hence $\partial_t \bCalH(t,z) = 0$. However, we will keep
the assumptions as above so that they are suitable also for more
general cases when the solution $\bm^{\e}$ might have a non-trivial
dependence on the slow and fast temporal scales.

From the classical theory of averaging for oscillatory ODEs, see
e.g. \cite{Bogoliubov,Pavliotis_Stuart}, it follows that, when
$\bCalH(t,\cdot)$ is $1$-periodic, the exact average magnetisation,
$\bar{\bm}$, is driven by the effective field
$\bar{\bCalH}(t):=\int_{0}^{1} \bCalH(t,s) \; ds$ and the effective
equation reads as

\begin{equation} \label{Effective_eqn}
\dfrac{d}{dt} \bar{\bm}(t)  = -\bar{\bCalH}(t) \bar{\bm}(t) - \gamma \bar{\bm}(t) \times \left( \bar{\bCalH}(t) \bar{\bm}(t) \right) , \quad  \bar{\bm}(0) = \bm^{\e}(0).
\end{equation}

Our aim in the next section is to show that the HMM solution $\bM$ in \eqref{Macro_Problem_eqn} approximates the solution $\bar{\bm}$ of the effective equation \eqref{Effective_eqn} without assuming any knowledge about the effective field $\bar{\bCalH}$. The equation \eqref{Effective_eqn} is needed only for the analysis and the HMM does not use it. 

\begin{Remark} In what follows, we assume that the initial magnetisation $|\bM_0| = 1$. Figure \ref{Fig_MagnetisationLength} shows that the HMM introduces an error at later times and therefore the magnetisation length is not preserved exactly. See Remark \ref{Magnetisation_Amplitude}, where we give an upper bound for this error.
\end{Remark}

\subsection{Upscaling error}

Our main result in this section is the following theorem which shows
that the upscaling step in the HMM strategy captures the right
macroscopic quantity.

\begin{Theorem} \label{Upscaling_Estimate_Thm}Let $\bar{\mathcal{H}}(t):= \int_{0}^{1} \mathcal{H}(t,s) \; ds$. Suppose that $\bm^{\e}(t)$ solves the micro problem \eqref{Micro_Problem1_eqn}. Moreover, suppose that the assumptions \eqref{Assumptions_eqn} hold. Then $\bF(t_a,\bM)$ given by the upscaling step \eqref{Upscaling1_eqn} satisfies
\begin{equation*}
\left|  \bF(t_a,\bM) +  \bar{\mathcal{H}}(t_a)  \bM \right| \leq  C_1 \left(  \tau + \left( \dfrac{\e}{\tau} \right)^{q+2} \right) \left| \bM \right|,
\end{equation*}
where $C_1$ is independent of $\e$, and $\tau$ but may depend on $p,q$ or $K$.
\end{Theorem}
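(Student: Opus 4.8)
\emph{Proof strategy.}
The plan is to substitute the micro equation into the upscaling formula and then strip off the two sources of error — the drift of the micro solution away from $\bM$ over the short window $I_\tau$, and the slow variation of the field — so that the averaging Lemma~\ref{Averaging_Lemma} can be applied to a genuinely oscillatory remainder.

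First, since $\beta=1$, the shifted micro problem \eqref{Micro_Problem1_eqn} takes the form $\tfrac{d}{ds}\bm^\e(s)=-\mathcal{H}^\e(s+t_a)\bm^\e(s)$ with $\bm^\e(0)=\bM$, so by \eqref{Upscaling1_eqn}
\[
\bF(t_a,\bM) = -\int_{-\tau/2}^{\tau/2} K_\tau(s)\,\mathcal{H}^\e(s+t_a)\,\bm^\e(s)\, ds ,
\]
and it suffices to bound $\bigl| \int_{-\tau/2}^{\tau/2} K_\tau(s)\,\mathcal{H}^\e(s+t_a)\,\bm^\e(s)\, ds - \bar{\mathcal{H}}(t_a)\bM \bigr|$. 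I would first replace $\bm^\e(s)$ by $\bM$: since $\mathcal{H}^\e$ is skew-symmetric, $\tfrac{d}{ds}|\bm^\e|^2=-2\langle\bm^\e,\mathcal{H}^\e\bm^\e\rangle=0$, hence $|\bm^\e(s)|\equiv|\bM|$; together with \eqref{Assumptions_eqn} ($k=0$) this gives $|\tfrac{d}{ds}\bm^\e(s)|\le c_1|\bM|$ and therefore $|\bm^\e(s)-\bM|\le c_1|\bM|\tau/2$ on $I_\tau$. Using $\|K_\tau\|_{L^1}=\|K\|_{L^1}=:c_K<\infty$ (finite since $K$ is bounded with compact support), the $\bm^\e(s)-\bM$ contribution is $O(\tau)|\bM|$.

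Next I would freeze the slow argument: writing $\mathcal{H}^\e(s+t_a)=\mathcal{H}(s+t_a,\,s/\e+r)$ with $r=\{t_a/\e\}$ and using \eqref{Assumptions_eqn} ($k=1$) to get $\bigl|(\mathcal{H}(s+t_a,\cdot)-\mathcal{H}(t_a,\cdot))\bM\bigr|\le c_1|s||\bM|$, replacing the first argument by $t_a$ in the integral against $K_\tau(s)\bM$ costs another $O(\tau)|\bM|$. What remains is $\bigl|\bigl(\int_{-\tau/2}^{\tau/2}K_\tau(s)\,\mathcal{H}(t_a,\,s/\e+r)\,ds-\bar{\mathcal{H}}(t_a)\bigr)\bM\bigr|$. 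Here the integrand $z\mapsto\mathcal{H}(t_a,z+r)$ is $1$-periodic with mean $\int_0^1\mathcal{H}(t_a,z+r)\,dz=\bar{\mathcal{H}}(t_a)$ and has entries bounded by $c_1$ in $L^\infty([0,1])$, so applying Lemma~\ref{Averaging_Lemma} entrywise (and using equivalence of matrix norms on $\mathbb{R}^{3\times 3}$) bounds this term by $C(\e/\tau)^{q+2}|\bM|$. Summing the three contributions gives the theorem with $C_1$ depending only on $c_1$, $c_K$ and the constant of Lemma~\ref{Averaging_Lemma}.

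The main obstacle is precisely this scale separation in $\mathcal{H}^\e(s+t_a)=\mathcal{H}(s+t_a,\,s/\e+r)$: one must peel away the slow dependence — at the unavoidable cost of $O(\tau)$ — before the oscillatory averaging lemma is applicable, and one must carry the fractional-part shift $r=\{t_a/\e\}$ through, which is harmless only because shifting the argument of a $1$-periodic function does not change its mean. The remaining ingredients — conservation of $|\bm^\e|$ via skew-symmetry, the short-time drift bound $|\bm^\e(s)-\bM|=O(\tau)$, and passing from entrywise to matrix-norm estimates — are routine.
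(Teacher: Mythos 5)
Your proposal is correct, and it reaches the stated bound by a genuinely different route than the paper. You substitute the (undamped, $\beta=1$) micro equation directly into the upscaling integral, $\bF(t_a,\bM)=-\int K_\tau(s)\,\mathcal{H}^{\e}(s+t_a)\bm^{\e}(s)\,ds$, and then make two $O(\tau)$ replacements — $\bm^{\e}(s)\to\bM$, justified by the exact norm conservation $|\bm^{\e}(s)|=|\bM|$ (skew-symmetry) together with the $k=0$ bound in \eqref{Assumptions_eqn}, and the freezing of the slow argument $\mathcal{H}(s+t_a,\cdot)\to\mathcal{H}(t_a,\cdot)$ via the $k=1$ bound — so that Lemma \ref{Averaging_Lemma} applies entrywise to the remaining purely oscillatory, $1$-periodic matrix integrand $\mathcal{H}(t_a,s/\e+r)$ (whose mean is unaffected by the shift $r$), giving the $(\e/\tau)^{q+2}$ term. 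The paper instead rescales to the fast time, Taylor-expands $\bm(t;\e)=\bm_0+\e\bm_1+\cdots$ in $\e$ (with $\bm_0=\bM$ and $\bm_1$ solving \eqref{m1_eqn}), applies Lemma \ref{Averaging_Lemma} to the leading term $-\mathcal{H}_{t_a,r}(0,t/\e)\bM$, and controls the remainder through the separate tail estimate of Lemma \ref{Tail_Estimate_Lemma}, which uses a Gronwall-type bound $|\be(t;\e)|\le C\e^2(1+t^2)|\bM|$ and an integration by parts that moves the time derivative onto $K^{\prime}$. Your argument buys some economy: it needs no formal expansion in $\e$ (hence no smoothness-in-$\e$ assumption, which the paper has to footnote), never differentiates the kernel (so only $\|K\|_{L^1}$ enters, and the argument is untroubled even for rough kernels such as the constant $q=-1$ kernel), and keeps the constants explicit in $c_1$ and $\|K\|_{L^1}$. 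What the paper's route buys is the explicit identification of the first corrector $\bm_1$ and the standalone tail lemma, which the authors also verify numerically and reuse conceptually to explain where the $O(\tau)$ error originates; both routes deliver the same $\tau+(\e/\tau)^{q+2}$ estimate.
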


\begin{proof}
We start by rescaling $\bm^{\e}$ to bring the $O(\e)$ oscillations back to $O(1)$ time scales. For this we introduce the fast time scale $\theta = t/\e$
$$
\bm(\theta; \e ) := \bm^{\e}(t). 
$$
Then we see that from \eqref{LLG_Beta1_eqn}
\begin{equation} \label{Scaled_m_eqn}
\partial_t \bm(t;\e) = -\e \mathcal{H}_{t_a,r}(\e t, t) \bm(t;\e).
\end{equation}
We now Taylor expand \footnotemark \footnotetext{Here, by the assumption \eqref{Assumptions_eqn}, $\mathcal{H}$ is smooth. It is assumed that $\bm$ is also smooth enough to allow for a Taylor's expansion.} $\bm(t;\e)$ in terms of $\e$
$$
\bm(t;\e) = \bm_0(t) + \e \bm_1(t)  + \dfrac{\e^2}{2} \bm_2(t) +  \cdots, 
$$
where  $\bm_j(t) := \partial_{\e}^{j} \bm(t;\e) |_{\e=0}$. From \eqref{Scaled_m_eqn} and the definition of $\bm_j$ with $j=0$, we see that
\begin{equation}\label{Term_m0_eqn}
\dfrac{d}{dt} \bm_0(t) = \partial_t \bm(t;0)  = 0, \text{ and  }  \bm_0(0)  = \bm^{\e}(0) = \bM. 
\end{equation}
Hence $\bm_0(t) =\bM$. Moreover, for $j=1$, we differentiate \eqref{Scaled_m_eqn} with respect to $\e$ and find that
\begin{eqnarray*}
\partial_t \partial_{\e} \bm(t;\e) &=& - \partial_{\e} \left(  \e \mathcal{H}_{t_a,r}\left( \e t, t \right) \bm\left( t;\e \right)  \right) \\
&=& - \mathcal{H}_{t_a,r}(\e t,t) \bm(t;\e)  - \e t  \mathcal{H}_{t_a,r}^{(1)}(\e t, t) \bm(t;\e)  - \e \mathcal{H}_{t_a,r}(\e t, t) \partial_{\e} \bm(t; \e),
\end{eqnarray*}
where $ \mathcal{H}_{t_a,r}^{(k)}(t, z): = \partial_t^{k} \mathcal{H}_{t_a,r}(t, z)$. Upon putting $\e=0$, we obtain
\begin{equation}\label{m1_eqn}
\dfrac{d}{dt} \bm_1(t)  = -\mathcal{H}_{t_a,r}(0,t) \bm_0(t), \quad \bm_1(0)=0.
\end{equation}
We now define $\be_{tail}(t;\e): =  \bm(t;\e) - \left( \bm_0(t) + \e \bm_1(t) \right)$. We can then write
\begin{eqnarray*}
\dfrac{d}{dt} \bm^{\e}(t) &=& \partial_{t} \left( \bm_0(t/\e) + \e \bm_1(t/\e) \right) + \partial_t \be_{tail}(t/\e;\e) \\
&=& \partial_{\theta} \bm_1(t/\e)  + \partial_t \be_{tail}(t/\e;\e) = -\mathcal{H}_{t_a,r}(0,t/\e) \bm_0(t/\e) + \partial_t \be_{tail}(t/\e;\e) \\ &=& -\mathcal{H}_{t_a,r}(0,t/\e) \bM + \partial_t \be_{tail}(t/\e;\e).
\end{eqnarray*}
From the last equality, it follows that
\begin{align*}
\left|  \bF(t_a,\bM) +  \bar{\mathcal{H}}(t_a)  \bM \right|  &:= \left| \int_{-\tau/2}^{\tau/2} K_{\tau}\left( s \right) \partial_s \bm^{\e}(s) \; ds  + \bar{\mathcal{H}}(t_a)  \bM  \right| \\ &\hspace{-3cm}= \left| -\int_{-\tau/2}^{\tau/2} K_{\tau}\left( s \right) \mathcal{H}_{t_a,r}(0,s/\e) \bM \; ds  + \int_{-\tau/2}^{\tau/2} K_{\tau}\left( s \right) \partial_s \be_{tail}(s/\e;\e) \; ds  + \bar{\mathcal{H}}(t_a)  \bM  \right| \\ &\hspace{-3cm}\leq
\underbrace{\left| - \int_{-\tau/2}^{\tau/2} K_{\tau}\left( s \right) \mathcal{H}_{t_a,r}(0,s/\e)  \; ds \bM +   \bar{\mathcal{H}}(t_a)  \bM \right|}_{E_{averaging}}  + \underbrace{\left|  \int_{-\tau/2}^{\tau/2} K_{\tau}\left( s \right) \partial_s \be_{tail}(s/\e;\e) \; ds \right| }_{E_{tail}} 
\end{align*}
By Lemma \ref{Averaging_Lemma}, we have $E_{averaging} \leq C \left(
\frac{\e}{\tau}\right)^{q+2} \left| \bM \right|$. Moreover, by Lemma
\ref{Tail_Estimate_Lemma} presented below, it follows that $E_{tail}
\leq C \tau \left| \bM \right|$. The proof is completed.
\end{proof}


\subsection{The tail error} 
The following lemma quantifies the tail error that arises from
replacing $\bm$ by $\tilde{\bm}:= \bm_0 + \e \bm_1$, where $\bm_0$ and
$\bm_1$ are given in \eqref{Term_m0_eqn} and \eqref{m1_eqn}
respectively, in the upscaling step.
\begin{Lemma} \label{Tail_Estimate_Lemma} Suppose that the assumptions \eqref{Assumptions_eqn} hold and that 
$$\tilde{\bm}(t;\e):= \bm_0(t) + \e \bm_1(t), \quad \text{ and } \quad
  \be(t;\e):= \bm(t;\e) - \tilde{\bm}(t;\e),$$ where $\bm_0$ and
  $\bm_1$ are given in \eqref{Term_m0_eqn} and \eqref{m1_eqn},
  respectively. Moreover, let $K \in \mathbb{K}^{p,q}$ and $\tau \geq
  \e$. Then
\begin{align} \label{e_Estimate_eqn}
\left|  \int_{-\tau/2}^{\tau/2} K_{\tau}(t) \partial_t \be(t/\e;\e) \; dt  \right|  \leq  C \tau \left| \bM \right|_2,
\end{align}
where $C$ is independent of $\e,\tau$ and $T$ but may depend on $\bCalH_{t_a,r}$.
\end{Lemma}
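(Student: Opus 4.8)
The plan is to integrate by parts to move the time derivative off of $\be$ and onto the kernel $K_\tau$, thereby converting a derivative estimate into a size estimate on $\be$ itself together with a bounded-variation estimate on $K_\tau^{(1)}$. Since $K \in \mathbb{K}^{p,q}$ has compact support in $[-1/2,1/2]$, the scaled kernel $K_\tau$ vanishes at $\pm\tau/2$, so the boundary terms drop and one gets
\begin{equation*}
\int_{-\tau/2}^{\tau/2} K_\tau(t)\,\partial_t \be(t/\e;\e)\,dt = -\frac{1}{\e}\int_{-\tau/2}^{\tau/2} K_\tau'(t)\,\be(t/\e;\e)\,dt,
\end{equation*}
where the $1/\e$ comes from the chain rule applied to $\be(t/\e;\e)$ (more precisely one should be careful here: $\partial_t[\be(t/\e;\e)]$ is $\e^{-1}(\partial_\theta \be)(t/\e;\e)$, and it is cleanest to integrate by parts directly in the variable $t$ treating $\be(t/\e;\e)$ as a function of $t$, so that $|K_\tau'|$ contributes a factor $\tau^{-2}$ and $\int |K_\tau'| \sim \tau^{-1}$). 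The upshot is a bound of the form $\tau^{-1}\sup_{|t|\le\tau/2}|\be(t/\e;\e)|$ up to constants depending on $\|K'\|_{L^1}$ and $\mathrm{TV}(K^{(1)})$.

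The second step is to estimate $\sup |\be(t/\e;\e)|$ on the interval of interest, i.e. for $|t| \le \tau/2$ which corresponds to fast time $|\theta| \le \tau/(2\e)$. Here $\be(t;\e) = \bm(t;\e) - \bm_0(t) - \e\bm_1(t)$ is the second-order Taylor remainder in $\e$ of the solution of the rescaled ODE \eqref{Scaled_m_eqn}, so by Taylor's theorem with integral remainder $\be(t;\e) = \tfrac{\e^2}{2}\int_0^1 (1-\sigma)\,\bm_2(t;\sigma\e)\,d\sigma$, reducing the task to a uniform bound on $\bm_2(t;\e) = \partial_\e^2 \bm(t;\e)$. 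Differentiating \eqref{Scaled_m_eqn} twice in $\e$ produces a linear ODE for $\partial_\e^2\bm$ with inhomogeneous terms built from $\mathcal{H}_{t_a,r}$, its first two $t$-derivatives, and $\bm, \partial_\e\bm$; using the assumption \eqref{Assumptions_eqn} (which bounds $\mathcal{H}$ and $\partial_t\mathcal{H}$, and I would note that $k=0,1$ suffices once the $\e t$ slot is handled, since the prefactor $\e$ in \eqref{Scaled_m_eqn} absorbs the extra $t$-derivative) together with Grönwall's inequality on the fast interval $[0,\tau/(2\e)]$, one obtains $|\bm_2(t;\e)| \le C (\tau/\e)^{\,m} e^{C\tau}|\bM|$ for some small power $m$ — but crucially, because every occurrence of $\mathcal{H}_{t_a,r}$ in these equations is multiplied by a factor of $\e$ coming from \eqref{Scaled_m_eqn}, when one translates back to slow time the growth is controlled and one finds $|\be(t/\e;\e)| \lesssim \tau^2 |\bM|$ on $|t|\le\tau/2$. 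Combining with the factor $\tau^{-1}$ from the integration by parts gives the claimed $C\tau|\bM|_2$.

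The main obstacle, and the step requiring the most care, is the bookkeeping of powers of $\e$ and $\tau$ in the Grönwall estimate for $\bm_2$: one must verify that the $\e t = \e\theta$ argument of $\mathcal{H}_{t_a,r}$ in the rescaled equation, combined with the explicit prefactor $\e$, yields cancellations so that the $\e^2/2$ from the Taylor remainder is not overwhelmed by negative powers of $\e$ hidden in $\bm_2$, leaving the net size $O(\tau^2)$ rather than something larger. Equivalently, it is perhaps cleaner to run the remainder estimate directly in the original (slow) variable $\bm^\e$: write $\bm^\e(s) = \bM - \int_{-\tau/2}^{0}\mathcal{H}_{t_a,r}(0,\sigma/\e)\bM\,d\sigma + \cdots$ and estimate the difference from $\tilde{\bm}(s/\e;\e)$ using \eqref{LLG_Beta1_eqn}, \eqref{Term_m0_eqn}, \eqref{m1_eqn}, and \eqref{Assumptions_eqn}, where the gain of one power of $\tau$ relative to the trivial $O(1)$ bound comes from the mismatch being a time-integral over an interval of length $\tau$ of terms that are themselves $O(\tau)$ after subtracting the leading behaviour. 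I expect the constant $C$ to depend on $T$ only through $e^{CT}$-type factors which are harmless since ultimately $\tau \ll 1$; the statement's claim that $C$ is independent of $T$ should follow because the micro-problem is solved only on the short interval $I_\tau$ of length $\tau$, not on $[0,T]$.
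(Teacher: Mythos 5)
Your proof has the same two-step skeleton as the paper's: integrate by parts to move the derivative onto the kernel (compact support of $K$ kills the boundary terms), which leaves a bound of the form $\tau^{-1}\sup_{|t|\le \tau/2}|\be(t/\e;\e)|$, and then show that this supremum is $O(\tau^{2})|\bM|$. One caution on step one: your first display, with the prefactor $1/\e$, is wrong as written; the quantity $\partial_t\be(t/\e;\e)$ in the lemma is the $t$-derivative of the composed function, so integrating by parts in $t$ gives $-\int K_\tau'(t)\,\be(t/\e;\e)\,dt$ with no $1/\e$ at all --- your parenthetical correction is the right reading, and it is exactly what the paper does, with $K_\tau'(t)=\tau^{-2}K'(t/\tau)$ producing the net factor $\tau^{-1}$.

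Where you genuinely differ is the pointwise bound on $\be$, and this is also where your argument stops short. The paper does not invoke the integral-remainder formula with $\bm_2=\partial_\e^2\bm$: it writes the exact equation satisfied by $\be$ in the fast variable, $\partial_t\be=-\e\,\bCalH_{t_a,r}(\e t,t)\be-\e\,\delta\bCalH_{t_a,r}\,\bm_0-\e^2\bCalH_{t_a,r}(\e t,t)\bm_1$, takes the inner product with $\be$ so that the skew-symmetric term vanishes identically (no Gr\"onwall factor is needed), and integrates the two forcing terms using $|\delta\bCalH_{t_a,r}\bm_0|\lesssim \e t\,|\bM|$ and $|\bm_1(t)|\lesssim t\,|\bM|$ from \eqref{Assumptions_eqn} and \eqref{m1_eqn}; this yields $|\be(t;\e)|\le C\e^2(1+t^2)|\bM|$, which at fast times $t\le\tau/(2\e)$ and with $\tau\ge\e$ is the desired $C\tau^2|\bM|$. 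Your route through $\bm_2$ and Gr\"onwall can be made to work --- differentiating \eqref{Scaled_m_eqn} twice in $\e$ gives $|\bm_2(t;\e)|\lesssim (t^2+\e t^3+\e^2 t^4)e^{C\e t}|\bM|$, so the $\e^2/2$ in the Taylor remainder does leave $O(\tau^2)$ --- but this is precisely the computation you identify as ``the main obstacle'' and leave at the level of ``some small power $m$'' and ``one must verify cancellations''; as written, the decisive estimate of the lemma is asserted rather than proved. Two further points: the equation for $\partial_\e^2\bm$ at $\e\neq 0$ contains the term $\e t^2\,\partial_t^2\bCalH_{t_a,r}(\e t,t)\bm$, so your route needs a bound on the second $t$-derivative of the field, which goes beyond the $k=0,1$ bounds in \eqref{Assumptions_eqn} as stated (harmless under the $C^\infty$ assumption, but the paper's direct argument uses only $k\le 1$); and your closing suggestion --- estimate the remainder by deriving and integrating its own equation rather than via $\bm_2$ --- is indeed the cleaner path and is essentially the paper's proof.
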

\begin{proof}
By definition we can write, with $ \delta \mathcal{H}_{t_a,r}:= \mathcal{H}_{t_a,r}(\e t, t) - \mathcal{H}_{t_a,r}(0, t) $,
\begin{align*}
\partial_t \be(t;\e) &= \partial_t \bm(t;\e)  - \partial_t \tilde{\bm}(t;\e)  = \partial_t \bm(t;\e)  - \e \partial_t \bm_1(t) \\ 
&=  -\e \mathcal{H}_{t_a,r}(\e t, t) \bm(t;\e)  + \e \mathcal{H}_{t_a,r}(0,t) \bm_0(t) \\
&=  - \e \mathcal{H}_{t_a,r}(\e t, t) \be(t;\e)  - \e  \mathcal{H}_{t_a,r}(\e t, t) \tilde{\bm}(t;\e) + \e \mathcal{H}_{t_a,r}(0,t) \bm_0(t) \\
&=  - \e  \mathcal{H}_{t_a,r}(\e t, t) \be(t;\e) - \e  \delta \mathcal{H}_{t_a,r}   \bm_0(t)  - \e^2  \mathcal{H}_{t_a,r}(\e t, t) \bm_1(t).
\end{align*}
Moreover, taking the inner-product of the above equation with $\be$ we get
\begin{align*}
\langle  \partial_t \be  , \be \rangle &= -\e \langle  \mathcal{H}_{t_a,r}(\e t, t) \be  , \be \rangle  - \e  \langle \delta \mathcal{H}_{t_a,r}   \bm_0(t) , \be \rangle - \e^2  \langle \mathcal{H}_{t_a,r}(\e t, t) \bm_1(t),  \be \rangle \\
& = - \e  \langle \delta \mathcal{H}_{t_a,r}   \bm_0(t) , \be \rangle - \e^2  \langle \mathcal{H}_{t_a,r}(\e t, t) \bm_1(t),  \be \rangle.
\end{align*}
This implies the inequality
\begin{align} \label{e_ineq}
\dfrac{1}{2} \partial_t \left|  \be(t;\e)   \right|^{2}  & \leq \left( \e \left|  \delta  \mathcal{H}_{t_a,r}   \bm_0(t)   \right| + \e^2 \left|  \mathcal{H}_{t_a,r}(\e t, t) \bm_1(t)   \right| \right) \left|  \be(t;\e)  \right|.
\end{align}
By the assumptions \eqref{Assumptions_eqn} and a Taylor's expansion,
we have $\left| \delta\bCalH_{t_a,r} \bm_0 \right| \approx \e t
\left| \bM \right|$. For the second term, we again use
\eqref{Assumptions_eqn} and \eqref{m1_eqn}. This gives
\begin{align*}
\left|  \mathcal{H}_{t_a,r}(\e t, t) \bm_1(t)   \right|  &\leq  C \left|  \bm_1 (t) \right|  = C  \left| \int_0^{t}  \bCalH(0,s)  \bm_0  \; ds  \right| \\ &\leq C   \int_0^{t}  \left| \bCalH(0,s)  \bm_0 \right| \; ds   \leq C t \left|   \bM \right|.
\end{align*}
From \eqref{e_ineq}, it follows that 
\begin{align} \label{e_Estimate_eqn_2}
\left|  \be(t;\e)  \right|_2 &\leq \int_0^{t}  \e \left|  \delta  \mathcal{H}_{t_a,r}   \bm_0(s)   \right|  + \e^2 \left|  \mathcal{H}_{t_a,r}(\e s, s) \bm_1(s)   \right| \; ds \leq C \e^2 \left(1 +  t^2  \right)\left|  \bM  \right|.
\end{align}
We can now bound the tail error in the upscaling procedure. With $\alpha:= \e/\tau$, we have
\begin{align*}
\left( \mathcal{K} \partial_t \be(t/\e;\e) \right)(0)  &:= \int_{-\tau/2}^{\tau/2} K_{\tau}(t) \partial_t \be(t/\e;\e) \; dt = \dfrac{1}{\tau}  \int_{-\tau/2}^{\tau/2} K(t/\tau) \partial_t \be(t/\e;\e) \; dt \\
&= -\dfrac{1}{\tau} \int_{-\tau/2}^{\tau/2} \dfrac{1}{\tau} K^{\prime}(t/\tau)  \be(t/\e;\e) \; dt = -\alpha \int_{-1/{2\alpha}}^{1/{2\alpha}} \dfrac{1}{\tau} K^{\prime}(t\alpha)  \be(t;\e) \; dt.
\end{align*}
Finally, we use \eqref{e_Estimate_eqn_2} to obtain
\begin{align*}
\left|  \left( \mathcal{K} \partial_t \be(t/\e;\e) \right)(0) \right| \leq \left| K^{\prime} \right|_{\infty} \dfrac{1}{\tau} \max_{t \in [-\frac{1}{2\alpha}, \frac{1}{2\alpha}]}\left| \be(t;\e) \right| \leq C \tau \left| \bM \right|.
\end{align*}
\end{proof}


\subsection{Estimates for the full solution}

In this section, we give a fully discrete error estimate for the
difference between the HMM solution given in \eqref{Macro_Solver_eqn}
and the solution of the exact locally-averaged equation
\eqref{Effective_eqn}. For this, let us denote the HMM solution by $\{
\bM_{n}^{\triangle t, \delta t} \}_{n=0}^{N}$. This is the solution
that is computed by solving the micro and the macro problems in a
discrete setting. We define also a semi-discrete solution $\{
\bM_{n}^{\triangle t} \}_{n=0}^{N}$ which is essentially the HMM
solution but when the micro problem is solved exactly.
Moreover, we introduce $\{ \bar{\bm}^{\triangle t}_j \}_{j=1}^{N}$
which approximates the solution of the effective equation
\eqref{Effective_eqn} by the implicit midpoint rule:

\begin{equation} \label{Discrete_Averaged_Eqn}
\bar{\bm}_{n+1}^{\triangle t}  = \bar{\bm}_{n+1}^{\triangle t}  - \triangle t \bar{\bCalH}(t_{n+1/2}) \bar{\bm}_{n+1/2}^{\triangle t} + \bN_{\gamma}[\bar{\bm}^{\triangle t}_{n+1/2}], \quad  \bar{\bm}^{\triangle t}_0 = \bm^{\e}(0),
\end{equation}
where $\bar{\bm}_{n+1/2} := \dfrac{1}{2} \left(   \bar{\bm}_{n+1} + \bar{\bm}_{n}  \right)$ and 
$$
\bN_{\gamma}[\bar{\bm}^{\triangle t}_{n+1/2}]  := - \gamma \bar{\bm}^{\triangle t}_{n+1/2} \times \left( \bar{\bCalH}(t_{n+1/2}) \bar{\bm}_{n+1/2}^{\triangle t} \right).
$$
On the other hand, we see from equation  \eqref{Macro_Solver_eqn} that the semi-discrete HMM solution satisfies
\begin{align} \label{Macro_Solver_NewNotation}
\bM_{n+1}^{\triangle t}  &= \bM_{n}^{\triangle t}  + \triangle t \bF(t_{n+1/2},\bM_{n+1/2}^{\triangle t}) + \gamma \triangle t \bM_{n+1/2}^{\triangle t} \times  \bF(t_{n+1/2},\bM_{n+1/2}^{\triangle t}) \\
&= \bM_{n}^{\triangle t} + \triangle t \left( - \bar{\bCalH}(t_{n+1/2}) \bM_{n+1/2}^{\triangle t} + E_{n+1/2}^{ups} \right)  + \triangle t \bN_{\gamma}[\bM_{n+1/2}^{\triangle t}] \nonumber \\ &- \gamma \triangle t \bM_{n+1/2}^{\triangle t} \times E^{ups}_{n+1/2}, \nonumber
\end{align}
where $E_{n+1/2}^{ups} :=  \bF(t_{n+1/2},\bM_{n+1/2}) + \bar{\bCalH}(t_{n+1/2}) \bM_{n+1/2}$ satisfies the estimate in Theorem \ref{Upscaling_Estimate_Thm}.
We can then split the overall error into three parts as follows
$$
\left|   \bM^{\triangle t , \delta t}_n  - \bar{\bm}(t_n) \right| \leq \underbrace{\left|   \bM^{\triangle t  , \delta t}_{n}  - \bM^{\triangle t}_n   \right|}_{E_{micro}}  + \underbrace{\left| \bM^{\triangle t}_{n}  - \bar{\bm}_n^{\triangle t} \right|}_{E_{HMM}}  + \underbrace{\left|   \bar{\bm}_n^{\triangle t}  - \bar{\bm}(t_n)  \right|}_{E_{Macro}}.
$$
The first and the last errors are the micro and the macro discretisation errors, which can be bounded in a rather standard way. The HMM error $E_{HMM}$ is, however, related to the upscaling error $E_{n}^{ups}$ which was estimated in Theorem \ref{Upscaling_Estimate_Thm}.  In what follows we will firstly prove a theorem which provides an upper-bound for the error $E_{HMM}$ for the case $\gamma  = 0$.
\begin{Theorem}\label{Theorem1_HMM_Error} Suppose that $\{ \bM_{n}^{\triangle t} \}_{n=0}^{N}$ solves \eqref{Macro_Solver_eqn}  with $\beta = 1$ and $\gamma  = 0$, where the associated micro problem \eqref{Micro_Problem_eqn} is solved exactly. Moreover, assume $K \in \mathbb{K}^{p,q}$ and that $\bar{\bm}_{n+1}^{\triangle t}$ solves \eqref{Discrete_Averaged_Eqn} and $\triangle t \leq 2/3$. Then the error $\be_{n}^{\triangle t} :=\bM^{\triangle t}_{n}  - \bar{\bm}_n^{\triangle t}$ satisfies
\begin{equation*}
| \be_N^{\triangle t}  |  \leq  C \sqrt{T} \left(  \tau  + \left( \dfrac{\e}{\tau} \right)^{q+2}  \right) e^{\frac{3}{4} T} \max_{0 \leq n \leq N} | \bM_{n+1/2}^{\triangle t} | ,
\end{equation*}
where $T = N \triangle t$, and $C$ is a constant independent of $\e, \tau$ and $T$ but may depend on $\bCalH,p ,q , K$.
\end{Theorem}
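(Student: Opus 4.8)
\emph{Proof proposal.} The plan is to derive a one‑step energy inequality for the error $\be_n^{\triangle t}:=\bM_n^{\triangle t}-\bar{\bm}_n^{\triangle t}$ and then close it by a discrete Grönwall argument. Setting $\gamma=0$ kills the nonlinear term $\bN_{\gamma}$ in \eqref{Discrete_Averaged_Eqn}, and subtracting that scheme from the form \eqref{Macro_Solver_NewNotation} of the macro solver gives the linear recursion
\begin{equation*}
\be_{n+1}^{\triangle t}-\be_n^{\triangle t}=-\triangle t\,\bar{\bCalH}(t_{n+1/2})\,\be_{n+1/2}^{\triangle t}+\triangle t\,E_{n+1/2}^{ups},
\end{equation*}
where $\be_{n+1/2}^{\triangle t}=(\be_n^{\triangle t}+\be_{n+1}^{\triangle t})/2$ and $\be_0^{\triangle t}=0$ since both schemes start from $\bm^{\e}(0)$.

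Next I would take the inner product of this identity with $\be_{n+1/2}^{\triangle t}$. The averaged matrix $\bar{\bCalH}(t_{n+1/2})$ inherits the skew‑symmetry of $\mathcal{H}$, so $\langle\bar{\bCalH}(t_{n+1/2})\be_{n+1/2}^{\triangle t},\be_{n+1/2}^{\triangle t}\rangle=0$, and the midpoint identity $\langle\be_{n+1}^{\triangle t}-\be_n^{\triangle t},\be_{n+1/2}^{\triangle t}\rangle=\tfrac12(|\be_{n+1}^{\triangle t}|^2-|\be_n^{\triangle t}|^2)$ yields
\begin{equation*}
\tfrac12\big(|\be_{n+1}^{\triangle t}|^2-|\be_n^{\triangle t}|^2\big)\le\triangle t\,|E_{n+1/2}^{ups}|\,|\be_{n+1/2}^{\triangle t}|.
\end{equation*}
Because the micro problem is solved exactly here, Theorem \ref{Upscaling_Estimate_Thm} applied with $\bM=\bM_{n+1/2}^{\triangle t}$ gives $|E_{n+1/2}^{ups}|\le C_1\rho\,|\bM_{n+1/2}^{\triangle t}|$ with $\rho:=\tau+(\e/\tau)^{q+2}$. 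Writing $a_n:=|\be_n^{\triangle t}|$ and combining Young's inequality with the convexity bound $|\be_{n+1/2}^{\triangle t}|^2\le\tfrac12(a_{n+1}^2+a_n^2)$, I obtain
\begin{equation*}
\big(1-\tfrac{\triangle t}{2}\big)a_{n+1}^2\le\big(1+\tfrac{\triangle t}{2}\big)a_n^2+\triangle t\,C_1^2\rho^2|\bM_{n+1/2}^{\triangle t}|^2.
\end{equation*}

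The hypothesis $\triangle t\le 2/3$ enters precisely here: it makes $1-\triangle t/2\ge 2/3>0$, so the recursion can be divided through to give $a_{n+1}^2\le c\,a_n^2+\tfrac32\triangle t\,C_1^2\rho^2|\bM_{n+1/2}^{\triangle t}|^2$ with amplification factor $c=(1+\triangle t/2)/(1-\triangle t/2)\le e^{3\triangle t/2}$. Iterating this scalar inequality from $a_0=0$, replacing every $|\bM_{n+1/2}^{\triangle t}|$ by $\max_{0\le n\le N}|\bM_{n+1/2}^{\triangle t}|$ and every power $c^{N-1-n}$ by $c^{N-1}$, the geometric sum collapses and produces a factor $N\triangle t=T$, so that
\begin{equation*}
a_N^2\le\tfrac32\,C_1^2\rho^2\,T\,e^{3T/2}\max_{0\le n\le N}|\bM_{n+1/2}^{\triangle t}|^2,
\end{equation*}
and taking square roots gives the claimed bound with the exponent $e^{3T/4}$. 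I expect the only genuinely delicate point to be the bookkeeping around the implicit midpoint term — using the $\triangle t\le 2/3$ threshold consistently to absorb the $a_{n+1}^2$ contribution and keep the amplification constant in the form $e^{3\triangle t/2}$ — while everything else is a routine discrete Grönwall estimate; the two structural facts that make it work are the skew‑symmetry of $\bar{\bCalH}$ (which removes the leading term without any norm bound on $\bar{\bCalH}$) and the fact that the upscaling defect $E^{ups}$ is controlled by the HMM iterate $\bM^{\triangle t}$ rather than by the error $\be^{\triangle t}$, keeping the recursion effectively linear.
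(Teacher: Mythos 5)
Your argument is correct and follows essentially the same route as the paper: the same error recursion, inner product with $\be_{n+1/2}^{\triangle t}$, skew-symmetry of $\bar{\bCalH}$, Young's inequality, the bound $|E^{ups}_{n+1/2}|\le C_1(\tau+(\e/\tau)^{q+2})|\bM^{\triangle t}_{n+1/2}|$ from Theorem \ref{Upscaling_Estimate_Thm}, and the step restriction $\triangle t\le 2/3$ to absorb the implicit $a_{n+1}^2$ term. The only difference is presentational: the paper telescopes the energy identity and then invokes the discrete Gronwall inequality, while you unroll an equivalent one-step recursion directly, arriving at the same constants $\sqrt{T}\,e^{3T/4}$.
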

\begin{proof} For simplicity we drop $\triangle t$  in the notation and instead write $\be_{n}$. We can see, from \eqref{Discrete_Averaged_Eqn} and \eqref{Macro_Solver_NewNotation}, that
$$
\be_{n+1} = \be_{n} - \triangle t \bar{\bCalH}(t_{n+1/2}) \be_{n+1/2} + \triangle t E_{n+1/2}^{ups}, \quad \be_0 = 0.
$$
Next we rearrange the above scheme and take the inner-product with $\be_{n+1/2}$. This gives
\begin{align*}
\langle  \be_{n+1} - \be_{n}  , \be_{n+1/2} \rangle  &=   -\triangle t \langle \bar{\bCalH}(t_{n+1/2}) \be_{n+1/2} , \be_{n+1/2} \rangle +  \triangle t \langle E_{n+1/2}^{ups} , \be_{n+1/2} \rangle \\
&= \triangle t \langle E_{n+1/2}^{ups} , \be_{n+1/2} \rangle.
\end{align*}
We now use the relation $\be_{n+1/2}  = \dfrac{1}{2} \left( \be_{n+1} + \be_{n} \right)$ and write
\begin{align*}
\langle  \be_{n+1} - \be_{n}  , \dfrac{1}{2} \left( \be_{n+1} + \be_{n} \right) \rangle &=   \dfrac{1}{2}  \langle  \be_{n+1}, \be_{n+1}\rangle  -  \dfrac{1}{2} \langle  \be_{n}, \be_{n}\rangle =\triangle t \langle E_{n+1/2}^{ups} , \be_{n+1/2} \rangle.
\end{align*}
From here and the fact that $\be_0 = 0$, it follows that
\begin{align*}
\langle  \be_{N}, \be_{N} \rangle &= \sum_{n=0}^{N-1}  \left( \langle  \be_{n+1}, \be_{n+1}\rangle - \langle  \be_{n}, \be_{n}\rangle \right) = 2 \triangle t \sum_{n=0}^{N-1}  \langle E_{n+1/2}^{ups} , \be_{n+1/2} \rangle \\
&\leq \triangle t \sum_{n=0}^{N-1} \left|   E_{n+1/2}^{ups}  \right|^2 +  \triangle t \sum_{n=0}^{N-1} \left|   \be_{n+1/2}  \right|^2 \\& \leq  \triangle t \sum_{n=0}^{N-1} \left|   E_{n+1/2}^{ups}  \right|^2   + \triangle t  \sum_{n=0}^{N-1} \left|   \be_{n}  \right|^2 + \dfrac{\triangle t}{2} \left|   \be_{N}  \right|^2. 
\end{align*}
We have with $\triangle t \leq 2/3$,
\begin{align*}
 \left|   \be_{N}  \right|^2 &\leq \left( 1  - \dfrac{\triangle t}{2} \right)^{-1} \left(    N \triangle t  \max_{ 0 \leq n \leq N-1} \left| E_{n+1/2}^{ups} \right|^2  + \triangle t  \sum_{n=0}^{N-1} \left|   \be_{n}  \right|^2  \right) \\
 &\leq  \left( 1  - \dfrac{\triangle t}{2} \right)^{-1}    N \triangle t  \max_{ 0 \leq n \leq N-1} \left| E_{n+1/2}^{ups} \right|^2  + \dfrac{2 \triangle t}{2-\triangle t}  \sum_{n=0}^{N-1} \left|   \be_{n}  \right|^2  \\
 &\leq \dfrac{3}{2} N \triangle t  \max_{ 0 \leq n \leq N-1} \left| E_{n+1/2}^{ups} \right|^2  + \dfrac{3 \triangle t}{2} \sum_{n=0}^{N-1} \left|   \be_{n}  \right|^2.
\end{align*}
By the discrete Gronwall's inequality and using Theorem
\ref{Upscaling_Estimate_Thm}, we obtain
$$
E_{HMM}: = \left| \bM^{\triangle t}_{N}  - \bar{\bm}_N^{\triangle t} \right| \leq C \sqrt{T} \left(  \tau  + \left( \dfrac{\e}{\tau} \right)^{q+2}  \right) e^{\frac{3}{4} T} \max_{0\leq n \leq N-1} | \bM_{n+1/2}|.
$$
\end{proof}
\begin{Remark}
  Note the constraint on the time step, $\triangle t \leq 2/3$, in
  Theorem \ref{Theorem1_HMM_Error}. It appears despite using the
  implicit midpoint rule on the macro level. Nevertheless, the
  restriction on the time step does not influence the efficiency as it
  still allows for large macroscopic stepsizes.
\end{Remark}
We present now a lemma which estimates the length of the macroscopic
magnetisation at different discrete time instants.
\begin{Lemma} \label{Magnetisation_Length_Lemma} Suppose that $\{ \bM_n^{\triangle t} \}_{n=0}^{N}$ solves \eqref{Macro_Solver_NewNotation} with an initial magnetisation that satisfies $| \bM_{0}| = 1$. Moreover, let $K \in \mathbb{K}^{p,q}$ and $\delta = \tau  + \left( \e/\tau \right)^{q+2}$. Then
\begin{equation*}
|  \bM_n  | \leq \exp(C {t_n \frac{\delta}{1- \frac{\delta \triangle t}{2}}})= 1  + O(\delta),
\end{equation*}
where $C$ is a constant independent of $n,\e,\tau$ and $t_n = n \triangle t$.
\end{Lemma}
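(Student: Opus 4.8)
The plan is to run a discrete energy estimate directly on the semi-discrete macro solver \eqref{Macro_Solver_NewNotation}, exploiting the fact that the implicit midpoint rule preserves, term by term, the orthogonality structure responsible for length conservation, so that the only source of growth of $|\bM_n|$ is the upscaling error controlled by Theorem \ref{Upscaling_Estimate_Thm}. First I would rewrite \eqref{Macro_Solver_NewNotation} in increment form,
\begin{equation*}
\bM_{n+1}-\bM_n = \triangle t\bigl(-\bar{\bCalH}(t_{n+1/2})\bM_{n+1/2} + E_{n+1/2}^{ups} + \bN_{\gamma}[\bM_{n+1/2}] - \gamma\,\bM_{n+1/2}\times E_{n+1/2}^{ups}\bigr),
\end{equation*}
and take the inner product with $\bM_{n+1/2} = \tfrac12(\bM_{n+1}+\bM_n)$. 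Skew-symmetry of $\bar{\bCalH}(t_{n+1/2})$ kills $\langle\bar{\bCalH}(t_{n+1/2})\bM_{n+1/2},\bM_{n+1/2}\rangle$, while $\bN_{\gamma}[\bM_{n+1/2}]$ and $\bM_{n+1/2}\times E_{n+1/2}^{ups}$ are cross products having $\bM_{n+1/2}$ as a factor and are therefore orthogonal to $\bM_{n+1/2}$; hence every term vanishes except the upscaling one, and since $\langle\bM_{n+1}-\bM_n,\bM_{n+1/2}\rangle = \tfrac12(|\bM_{n+1}|^2-|\bM_n|^2)$, I obtain
\begin{equation*}
|\bM_{n+1}|^2 - |\bM_n|^2 = 2\triangle t\,\langle E_{n+1/2}^{ups},\bM_{n+1/2}\rangle .
\end{equation*}

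Next I would close a scalar recursion. By Cauchy--Schwarz and Theorem \ref{Upscaling_Estimate_Thm} (which applies since $K\in\mathbb{K}^{p,q}$), with $\delta = \tau + (\e/\tau)^{q+2}$ one has $|E_{n+1/2}^{ups}| = |\bF(t_{n+1/2},\bM_{n+1/2}) + \bar{\bCalH}(t_{n+1/2})\bM_{n+1/2}| \le C_1\delta\,|\bM_{n+1/2}|$, so that $|\bM_{n+1}|^2-|\bM_n|^2 \le 2C_1\triangle t\,\delta\,|\bM_{n+1/2}|^2$. Using $|\bM_{n+1/2}|^2 \le \tfrac12(|\bM_{n+1}|^2+|\bM_n|^2)$ and setting $a_n := |\bM_n|^2$ gives $(1-C_1\delta\triangle t)\,a_{n+1} \le (1+C_1\delta\triangle t)\,a_n$. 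For $\delta$ small enough that $C_1\delta\triangle t<1$ this is solvable for $a_{n+1}$, and iterating from $a_0 = |\bM_0|^2 = 1$ yields
\begin{equation*}
a_n \le \left(\frac{1+C_1\delta\triangle t}{1-C_1\delta\triangle t}\right)^n .
\end{equation*}
Finally I would pass to the stated form using the elementary bound $\tfrac{1+x}{1-x}\le\exp\bigl(\tfrac{2x}{1-x}\bigr)$ for $0\le x<1$ with $x=C_1\delta\triangle t$: this gives $a_n \le \exp\bigl(\tfrac{2C_1 n\triangle t\,\delta}{1-C_1\delta\triangle t}\bigr)$, and taking square roots, using $n\triangle t=t_n$, and absorbing the constants into the generic $C$ (and, if one wants the exact denominator $1-\delta\triangle t/2$, using $C_1\delta\triangle t\le\delta\triangle t/2$ for $\delta$ small), one gets $|\bM_n|\le\exp\bigl(Ct_n\tfrac{\delta}{1-\delta\triangle t/2}\bigr)$. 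Since for fixed $t_n=T$ and $\delta\to0$ the exponent equals $CT\delta(1+O(\delta))$, this is $1+O(\delta)$; the same computation with the reversed inequality, $|\bM_{n+1}|^2-|\bM_n|^2\ge-2C_1\triangle t\,\delta\,|\bM_{n+1/2}|^2$, produces the matching lower bound $|\bM_n|\ge1-O(\delta)$, hence $|\bM_n|=1+O(\delta)$.

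I do not expect a genuine obstacle here. The only points that demand care are verifying that the midpoint discretisation annihilates all the skew-symmetric and cross-product contributions in the energy identity — so that, were it not for the upscaling error, the length would be conserved exactly — and handling the implicit coupling between $a_{n+1}$ and $a_n$, which forces the mild smallness condition $C_1\delta\triangle t<1$ needed to keep the one-step amplification factor $\tfrac{1+C_1\delta\triangle t}{1-C_1\delta\triangle t}$ finite and positive.
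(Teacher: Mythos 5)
Your proposal is correct and follows essentially the same route as the paper: take the inner product of \eqref{Macro_Solver_NewNotation} with $\bM_{n+1/2}$ so that the skew-symmetric and cross-product terms vanish, bound $|E_{n+1/2}^{ups}|\le C\delta|\bM_{n+1/2}|$ via Theorem \ref{Upscaling_Estimate_Thm}, close the recursion with $|\bM_{n+1/2}|^2\le\tfrac12(|\bM_{n+1}|^2+|\bM_n|^2)$, and iterate the amplification factor $\bigl(\tfrac{1+x}{1-x}\bigr)^n\le\exp\bigl(\tfrac{2nx}{1-x}\bigr)$. Your explicit verification of the orthogonality cancellations and the added lower bound giving $|\bM_n|\ge 1-O(\delta)$ are harmless refinements of the same argument.
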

\begin{proof} We take the inner-product of \eqref{Macro_Solver_NewNotation} with $\bM_{n+1/2}$. Following the same idea as in the proof of Theorem \ref{Theorem1_HMM_Error} and using Theorem \ref{Upscaling_Estimate_Thm} we obtain
\begin{align*}
\langle  \bM_{n+1}  - \bM_{n+1} \rangle -  \langle  \bM_{n}  - \bM_{n} \rangle & = 2 \triangle t \langle E_{n+1/2}^{ups} , \bM_{n+1/2} \rangle \\
& \leq \triangle t | E_{n+1/2}^{ups}| |\bM_{n+1/2}|  \leq C \triangle t   \delta |\bM_{n+1/2}|^2 \\
& \leq C \dfrac{\triangle t \delta}{2} \left( |\bM_{n+1}|^2  + | \bM_{n} |^{2} \right).
\end{align*}
Hence 
$$
|\bM_{n+1}|^{2} \leq \left( \dfrac{1 + C \frac{\delta \triangle t}{2}}{1 - C \frac{\delta \triangle t}{2}} \right) |\bM_{n}|^2.
$$
From here and the fact that $|\bM_0|_2 =1$ it follows that, with $z := x/(1-x)$ where $x = C \frac{\delta \triangle t}{2}$,
$$|\bM_{n}|^{2} \leq \left( \dfrac{1 + C \frac{\delta \triangle t}{2}}{1 - C \frac{\delta \triangle t}{2}} \right)^n  = \left( 1 + 2 z \right)^n \leq \exp(2 n z).$$
The final result follows by putting $t_n = n \triangle t$. 
\end{proof}
We now present a theorem where an estimate of the HMM error, $E_{HMM}$, for the nonlinear case when $\gamma > 0 $ is given.
\begin{Theorem} \label{Theorem2_HMM_Error} Suppose that $\{ \bM_{n}^{\triangle t} \}_{n=0}^{N}$ solves \eqref{Macro_Solver_eqn}  with $\beta = 1$ and $\gamma  > 0$, where the associated micro problem \eqref{Micro_Problem_eqn} is solved exactly. Moreover, assume $K \in \mathbb{K}^{p,q}$ and that $\bar{\bm}_{n+1}^{\triangle t}$ solves \eqref{Discrete_Averaged_Eqn} and $\triangle t \leq \frac{2}{3 \kappa}$ with $\kappa  = (1 + 2 c_1 \gamma  + 2 C_1 \gamma)$, where $C_1 = O(1)$ is a constant such that $|\bM_{n+1/2}^{\triangle t}| \leq C_1$, and
$$
\langle \partial_\bM \bN_{\gamma}(\bar{\bm}_{n+1/2} + \theta \be_{n+1/2}) \be_{n+1/2}, \be_{n+1/2} \rangle \leq \gamma c_1 \left| \be_{n+1/2} \right|^2, \quad \forall  \quad \theta  \in [0,1].
$$
Then the error $\be_n^{\triangle t} = \bM_{n}^{\triangle t} - \bar{\bm}_{n}^{\triangle t}$ satisfies
 \begin{equation*}
\left| \be_N^{\triangle t} \right|  \leq C \sqrt{T} \left(  \tau  + \left( \dfrac{\e}{\tau} \right)^{q+2}  \right) e^{\frac{3 \kappa}{4} T} \max_{0 \leq n \leq N} \left| \bM_{n+1/2}^{\triangle t} \right|,
\end{equation*}
where $T = N \triangle t$, and $C$ is a constant independent of $\e, \tau$ and $T$ but may depend on $\bCalH,p ,q , K$ or $\gamma$.
\end{Theorem}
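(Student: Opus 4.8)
The plan is to run the same discrete energy argument as in Theorem~\ref{Theorem1_HMM_Error}, but now carrying the nonlinear term along and absorbing it into a Gronwall factor. First I would subtract the scheme \eqref{Discrete_Averaged_Eqn} from \eqref{Macro_Solver_NewNotation} to get, for $\be_n := \bM_n^{\triangle t} - \bar{\bm}_n^{\triangle t}$,
$$
\be_{n+1} = \be_n - \triangle t\,\bar{\bCalH}(t_{n+1/2})\be_{n+1/2} + \triangle t\bigl(\bN_{\gamma}[\bM_{n+1/2}^{\triangle t}] - \bN_{\gamma}[\bar{\bm}_{n+1/2}^{\triangle t}]\bigr) + \triangle t\,E_{n+1/2}^{ups} - \gamma\triangle t\,\bM_{n+1/2}^{\triangle t}\times E_{n+1/2}^{ups},
$$
with $\be_0 = 0$. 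Taking the inner product with $\be_{n+1/2} = \tfrac12(\be_{n+1}+\be_n)$, the linear term $\langle \bar{\bCalH}(t_{n+1/2})\be_{n+1/2},\be_{n+1/2}\rangle$ vanishes by skew-symmetry, exactly as before, and the left side becomes $\tfrac12(|\be_{n+1}|^2 - |\be_n|^2)$.

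Next I would estimate the three remaining terms on the right. For the nonlinear difference I would write $\bN_{\gamma}[\bM_{n+1/2}^{\triangle t}] - \bN_{\gamma}[\bar{\bm}_{n+1/2}^{\triangle t}] = \int_0^1 \partial_\bM \bN_{\gamma}(\bar{\bm}_{n+1/2}^{\triangle t} + \theta\be_{n+1/2})\,\be_{n+1/2}\,d\theta$ by the fundamental theorem of calculus and invoke the hypothesized one-sided bound $\langle \partial_\bM\bN_{\gamma}(\cdot)\be_{n+1/2},\be_{n+1/2}\rangle \le \gamma c_1|\be_{n+1/2}|^2$, giving $\triangle t\,\gamma c_1|\be_{n+1/2}|^2$. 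For the cross term I would use Cauchy--Schwarz together with the a priori bound $|\bM_{n+1/2}^{\triangle t}|\le C_1$ and Young's inequality, and for the plain $E^{ups}$ term Young's inequality as in Theorem~\ref{Theorem1_HMM_Error}. Collecting the coefficients of $|\be_{n+1/2}|^2$ yields the constant $\kappa = 1 + 2c_1\gamma + 2C_1\gamma$, i.e.
$$
\tfrac12\bigl(|\be_{n+1}|^2 - |\be_n|^2\bigr) \le \tfrac{\kappa\triangle t}{2}|\be_{n+1/2}|^2 + C\triangle t\,|E_{n+1/2}^{ups}|^2.
$$

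From here the argument is verbatim that of Theorem~\ref{Theorem1_HMM_Error}: bound $|\be_{n+1/2}|^2 \le \tfrac12(|\be_{n+1}|^2 + |\be_n|^2)$, telescope over $n$ using $\be_0 = 0$, isolate $|\be_N|^2$ using the step restriction $\triangle t \le \tfrac{2}{3\kappa}$ (which keeps $1 - \tfrac{\kappa\triangle t}{2}$ bounded away from zero and produces the factor $\tfrac32$ after rearrangement), and apply the discrete Gronwall inequality, which generates the growth factor $e^{\frac{3\kappa}{4}T}$. Finally I would substitute the upscaling estimate of Theorem~\ref{Upscaling_Estimate_Thm}, namely $\max_n|E_{n+1/2}^{ups}| \le C_1(\tau + (\e/\tau)^{q+2})\max_n|\bM_{n+1/2}^{\triangle t}|$, to reach the stated bound.

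The main obstacle is the nonlinear term: unlike the $\gamma=0$ case, $\bN_{\gamma}[\bM_{n+1/2}^{\triangle t}] - \bN_{\gamma}[\bar{\bm}_{n+1/2}^{\triangle t}]$ is not skew-symmetric against $\be_{n+1/2}$ and cannot simply be discarded. The key structural fact that makes the proof go through is that $\bN_{\gamma}$ is only \emph{quadratic} in its argument (it is $-\gamma\,(\cdot)\times(\bar{\bCalH}\,(\cdot))$), so its Jacobian is affine, and on the segment joining $\bar{\bm}_{n+1/2}^{\triangle t}$ and $\bM_{n+1/2}^{\triangle t}$ — both of which remain near the unit sphere by Lemma~\ref{Magnetisation_Length_Lemma} — it is uniformly bounded; this is precisely what the hypothesis on $\partial_\bM\bN_{\gamma}$ encodes, turning a potential loss of control into a harmless Gronwall term. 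A secondary point requiring care is that the bound $|\bM_{n+1/2}^{\triangle t}|\le C_1$, needed for the cross term, is not automatic for the HMM iterates since the magnetisation length drifts by $O(\delta)$; but Lemma~\ref{Magnetisation_Length_Lemma} guarantees it holds with $C_1 = 1 + O(\delta) = O(1)$, consistent with the hypotheses of the theorem.
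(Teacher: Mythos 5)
Your proposal is correct and follows essentially the same route as the paper: subtract the two midpoint schemes, test with $\be_{n+1/2}$ so the skew-symmetric term drops, handle the nonlinear difference via the Jacobian along the segment (the paper uses a mean-value form with a single $\theta$, you use the integral form — both rest on the same hypothesized one-sided bound), control the cross term with $|\bM_{n+1/2}^{\triangle t}|\leq C_1$ from Lemma \ref{Magnetisation_Length_Lemma}, collect to get $\kappa$, then telescope, use $\triangle t \leq \tfrac{2}{3\kappa}$, apply discrete Gronwall and Theorem \ref{Upscaling_Estimate_Thm}. The only deviations are cosmetic bookkeeping of the Young-inequality constants, which are absorbed into $C$ exactly as in the paper.
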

\begin{proof}
For convenience we write $-\gamma \mathcal{J}$ to denote the Jacobian $\partial_\bM \bN_{\gamma}$ and we remove $\triangle t$ in the notations.  By \eqref{Discrete_Averaged_Eqn} and \eqref{Macro_Solver_NewNotation}, we see that the error $\be_n$ satisfies
\begin{align*}
\be_{n+1} &= \be_{n} - \triangle t \bar{\bCalH}(t_{n+1/2}) \be_{n+1/2} + \triangle t E_{n+1/2}^{ups}  + \triangle t \left(   \bN_{\gamma}[\bM_{n+1/2}] -   \bN_{\gamma}[\bar{\bm}_{n+1/2}]  \right) \\ &- \gamma \triangle t \bM_{n+1/2}\times E^{ups}_{n+1/2} \\
&= \be_{n} - \triangle t \bar{\bCalH}(t_{n+1/2}) \be_{n+1/2} + \triangle t E_{n+1/2}^{ups}  - \gamma \triangle t \mathcal{J}[\bar{\bm}_{n+1/2} +  \theta \be_{n+1/2}] \be_{n+1/2} \\&- \gamma \triangle t \bM_{n+1/2}\times E^{ups}_{n+1/2}.
\end{align*}
Hence
\begin{align*}
\langle  \be_{n+1} -\be_{n}, \be_{n+1} \rangle  &= \triangle t \langle  E_{n+1/2}^{ups} , \be_{n+1/2}\rangle - \gamma \triangle t  \langle \mathcal{J}[\bar{\bm}_{n+1/2} +  \theta \be_{n+1/2}] \be_{n+1/2}, \be_{n+1/2}  \rangle \\
&-\gamma \triangle t \langle  \bM_{n+1/2}\times E^{ups}_{n+1/2} , \be_{n+1/2}  \rangle.
\end{align*}
We now use the fact that by Lemma \ref{Magnetisation_Length_Lemma}, $|\bM_{n+1/2}| \leq C_1$, where $C_1 = O(1)$ for sufficiently small values for $\delta$ and we proceed similar to the analysis of the linear case. Moreover, we define $\kappa  := \left(  1 + 2 c_1 \gamma  + 2 C_1 \gamma  \right)$ and find that
\begin{align*}
\langle  \be_{N}, \be_{N} \rangle &= \sum_{n=0}^{N-1}  \left( \langle  \be_{n+1}, \be_{n+1}\rangle - \langle  \be_{n}, \be_{n}\rangle \right) \\ & \hspace{-1cm}= 2 \triangle t \sum_{n=0}^{N-1}  \langle E_{n+1/2}^{ups} , \be_{n+1/2} \rangle 
- 2\gamma \triangle t \sum_{n=0}^{N-1}  \langle \mathcal{J}[\bar{\bm}_{n+1/2} +  \theta \be_{n+1/2}] \be_{n+1/2}, \be_{n+1/2}  \rangle \\ & \hspace{-1cm}- 2 \gamma \triangle t  \sum_{n=0}^{N-1} \langle  \bM_{n+1/2}\times E^{ups}_{n+1/2} , \be_{n+1/2}  \rangle \\
& \hspace{-1cm}\leq  \triangle t \sum_{n=0}^{N-1} \left|   E_{n+1/2}^{ups}  \right|^2 + \left(  1 + 2 c_1 \gamma   \right) \triangle t  \sum_{n=0}^{N-1} \left|   \be_{n+1/2}  \right|^2 + 2 C_1 \gamma \triangle t \sum_{n=0}^{N-1} \left|   E_{n+1/2}^{ups} \right|^2 \\&+ \left| \be_{n+1/2} \right|^2
\leq  (1+ 2 C_1 \gamma) \triangle t \sum_{n=0}^{N-1} \left|   E_{n+1/2}^{ups}  \right|^2   +\kappa \triangle t  \sum_{n=0}^{N-1} \left|   \be_{n}  \right|^2 + \kappa \dfrac{\triangle t}{2} \left|   \be_{N}  \right|^2. 
\end{align*}
Now let $\triangle t \leq \frac{2}{3 \kappa}$. Then
\begin{align*}
\left| \be_N \right|^2 & \leq \dfrac{1 + 2 C_1 \gamma }{1 - \kappa \triangle t /2}   N \triangle t  \max_{0\leq n \leq N} \left| E_{n+1/2}^{ups} \right|^2 +  \dfrac{3 \kappa \triangle t}{2} \sum_{n=0}^{N-1} \left|   \be_{n}  \right|^2 \\ 
&\leq   \dfrac{3 \kappa }{2}   T  \max_{0\leq n \leq N} \left| E_{n+1/2}^{ups} \right|^2 + \dfrac{3 \kappa \triangle t}{2}  \sum_{n=0}^{N-1} \left|   \be_{n}  \right|^2.
 \end{align*}
 The final estimate is obtained by an application of the discrete Gronwall's inequality and Theorem
 \ref{Upscaling_Estimate_Thm}.
\end{proof}
 
 \begin{Remark}\label{Magnetisation_Amplitude}
Using the error estimates in Theorems \ref{Theorem1_HMM_Error} and \ref{Theorem2_HMM_Error}, the estimate in Lemma \ref{Magnetisation_Length_Lemma} and the fact that $|\bar{\bm}_{n}^{\triangle t}|_2 = 1$, one can see that
\begin{align*}
\left|| \bM_{N}^{\triangle t, \delta t} |  - 1 \right| &\leq \left|   \bM_{N}^{\triangle t, \delta t} -  \bM_{N}^{\triangle t} \right|  + \left| \bar{\bm}_N^{\triangle t}  -  \bM_{N}^{\triangle t}\right|  + \Bigl|\underbrace{\left| \bar{\bm}_N^{\triangle t} \right| }_{=1}  -1 \Bigr| \\
&\leq E_{micro}  +  C_0 \sqrt{T}  \left(  \tau  + \left( \dfrac{\e}{\tau} \right)^{q+2}  \right) e^{C_1 T},
\end{align*}
which shows that the magnetisation length does not remain constant for
HMM type multiscale couplings but that the error can be reduced down
to $O(\e)$ by using high order methods at the micro level and by
choosing $\tau \approx \e^{1 - \beta}$, for $\beta = 1/(q+2)$. Finally, a typical estimate for the micro error takes the form $E_{micro} \leq C \left(  \dfrac{\delta t}{\e}  \right)^r$, where $\delta t$ is a microscopic step-size and $r$ represents the order of the accuracy of the micro-solver.
 \end{Remark}

\vspace{2cm}

\section{Numerical results}
\subsection{The tail error}
In this section, we aim at illustrating the validity of the expansion made in the proof of Theorem \ref{Upscaling_Estimate_Thm}. For this we solve the problem \eqref{Scaled_m_eqn} between $0\leq t \leq 1$ with the initial condition $\bM = [0,0,1]^T$, and the external field 
$$\bH(t,z) = [0,0,1]^T + [\sin(2 \pi z),\cos(2 \pi z),0]^T.$$
We study the convergence for two cases: (i) The convergence against $\bm_0$, where $\bm_0 = \bM$, (ii) The convergence against the truncated solution $\tilde{\bm}(t;\e) = \bm_0  + \e \bm_1(t)$, where $\bm_1$ solves \eqref{m1_eqn}. The error is defined as 
$$
E_{tail} = \max_{0\leq t \leq 1} \left|  \bm(t;\e)  - {\bf{u}}(t;\e) \right|_2, \quad \text{ where }  {\bf{u}}(t;\e) = \bm_0 \text{ or } \tilde{\bm}(t;\e).
$$
The left plot in Figure \ref{Fig_TailFConv} shows that we gain one order in $\e$ upon including the term $\bm_1$ in the expansion and, in particular, that the estimate \eqref{e_Estimate_eqn} holds.
\subsection{The upscaling error}
We show now evidence that the estimate, for the upscaling error, given in Theorem \ref{Upscaling_Estimate_Thm}, is sharp. For this we compare the upscaled flux in \eqref{Upscaling_eqn} with the exact effective flux $\bar{\bCalH}(t) \bM$ for $\bM = [0,0,1]^T$. The micro problem is solved with the initial data $ [0,0,1]^T$ and the external field 
\begin{equation}\label{External_Field_eqn}\bH^{\e}(t) = [0,0,1]^T + [\cos(2 \pi t/\e),\cos(2 \pi t/\e),0]^T.\end{equation}
In this case $\bar{\mathcal{H}}  = [0,0,1]^T$. The convergence is shown for two cases: (i) By setting $\tau = 5.3 \e$, and letting $\e \longrightarrow 0$ for increasing values of $q$, where higher $q$ means a better regularity for the kernel $K$, see the right plot in Figure \ref{Fig_TailFConv} (ii) By fixing $\tau = 0.1$ and $\tau = 0.2$ and study the difference as $\e \longrightarrow 0$, see the Figure \ref{Fig_FConv_FixedTau}. Both tests corroborate our theoretical findings in Theorem \ref{Upscaling_Estimate_Thm}.
\subsection{The full solution}
In Figure \ref{Fig_HMMvsFullSolutionDet} (top plots), we consider two examples with 
$$ \bH(t,t/\e) = \bH_0  + \bH_1(t/\e),
$$
where $\bH_1(t)  = [\sin(2 \pi t), \cos(2 \pi t),0]^T$ or $\bH_1(t)  = [\sin(2 \pi t)^2, \cos(2 \pi t)^2,0]^T$ and $\bH_0 = [0,0,1]^T$ in both cases. We observe that, in the former, the solution $\bm^{\e}$ converges to $[0,0,1]^T$ as $T$ increases whereas in the latter, the solution converges to a non-trivial limit given by the average. In both cases, the HMM solution captures the coarse features of the exact solution using only $20$ discretisation points. In the same Figure (Figure \ref{Fig_HMMvsFullSolutionDet} bottom plot), we depict a numerical solution with a damping parameter $\gamma = 0.1$, where all other physical and numerical parameters are chosen the same as in the top plots. The HMM solution again captures the correct magnetisation dynamics.


\subsection{Magnetisation amplitude} We consider the full problem with an external field of the form \eqref{External_Field_eqn}, where $\bH_1(t)  = [\sin(2 \pi t), \cos(2 \pi t),0]^T$ and $\bH_0 = [0,0,1]^T$ and we look at two scenarios: (i) For a fixed $\e$ we keep track of the evolution of the magnetisation amplitude over the macro time steps, (ii) We study the deviation of the magnetisation amplitude in comparison to the initial length of magnetisation as we refine the small scale parameter $\varepsilon$. The initial magnetisation is given as $\bM_0  = \frac{1}{\sqrt{3}}[1,1,1]^T$ so that $|\bM_0| = 1$. Figure \ref{Fig_MagnetisationLength} shows that the HMM does not preserve the initial length of the magnetisation. In the left plot, we show the evolution of the magnetisation amplitude for each macroscopic time when $\e = 0.01$. In this simulation we have used $N \triangle t = 2 \pi$ and $N=20$. The magnetisation length $|\bM_n|$ for $n>0$ clearly deviates from $|\bM_0| = 1$. In the right plot, however, we study the convergence 
$$
\max_{0 \leq n \leq N} \left| \left|  \bM_n \right|_2  - \left|  \bM_0 \right|_2 \right| \longrightarrow 0
$$
as $\e \longrightarrow 0$, for $N \triangle t  = 1$ and $N=10$. The simulation result verifies the expected convergence rate stated in Remark \ref{Magnetisation_Amplitude}. In the simulations we have chosen a kernel $K \in \mathbb{K}^{p,q}$ with $p=1$ and $q=7$. Moreover, we set $\tau  = 5.3 \e$ and observe $O(\e)$ convergence  in the selected range of $\e$. Note that the $O((\e/\tau)^{q+2})$ error term is not seen in this simulation since we have used a large $q$. 

 \begin{figure}[h] 
    \centering
        \includegraphics[width=0.48\textwidth]{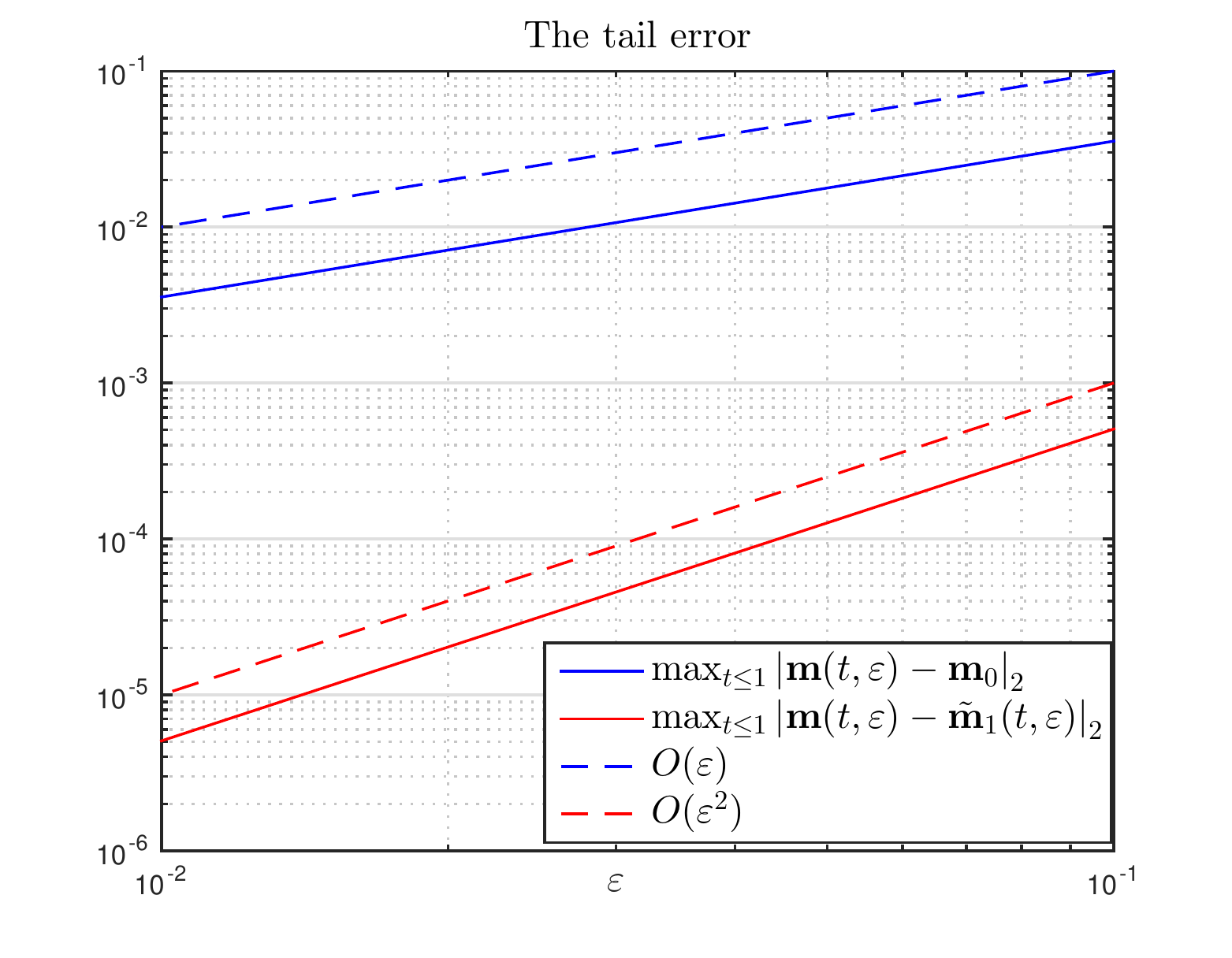}
        \includegraphics[width=0.48\textwidth]{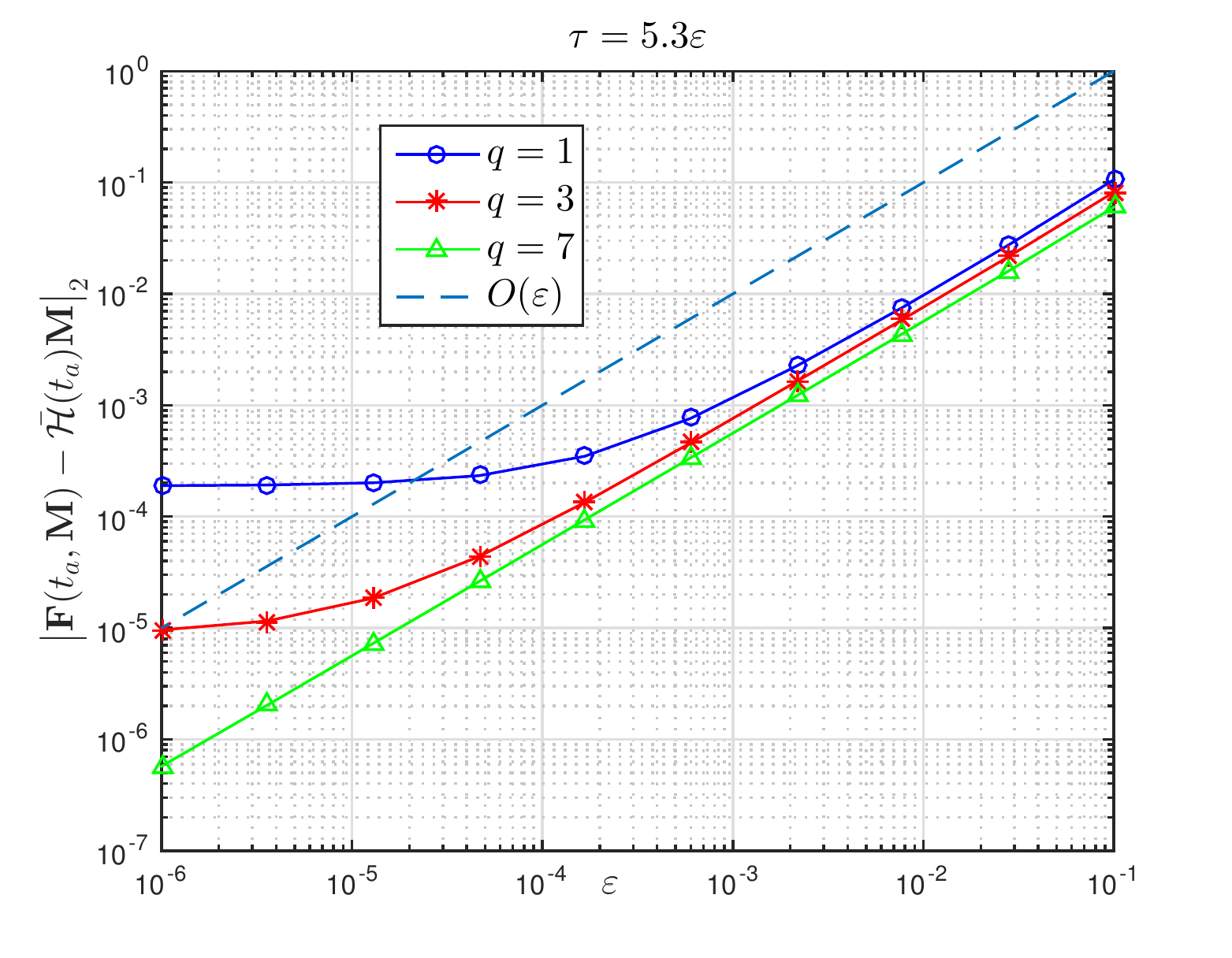}
    \caption{(Left) The differences $\bm - \bm_0$ and $\bm - \tilde{\bm}$, where $\tilde{\bm} = \bm_0 + \e \bm_1$ as $\e \longrightarrow 0$ are shown.  The result shows that the estimate \eqref{e_Estimate_eqn} is sharp.  (Right) The upscaling error, which is estimated in Theorem \ref{Upscaling_Estimate_Thm}, is depicted. An averaging domain of size $\tau = 5.3 \e$ is used in the simulation. The constant part of the error corresponds to $O((\e/\tau)^{q+2})$ from Theorem \ref{Upscaling_Estimate_Thm} and it decreases upon using smoother kernels with high $q$. This result confirms the error estimate from the Theorem.}
    \label{Fig_TailFConv}
\end{figure}

\begin{figure}[h]
    \centering
       \includegraphics[width=0.48\textwidth]{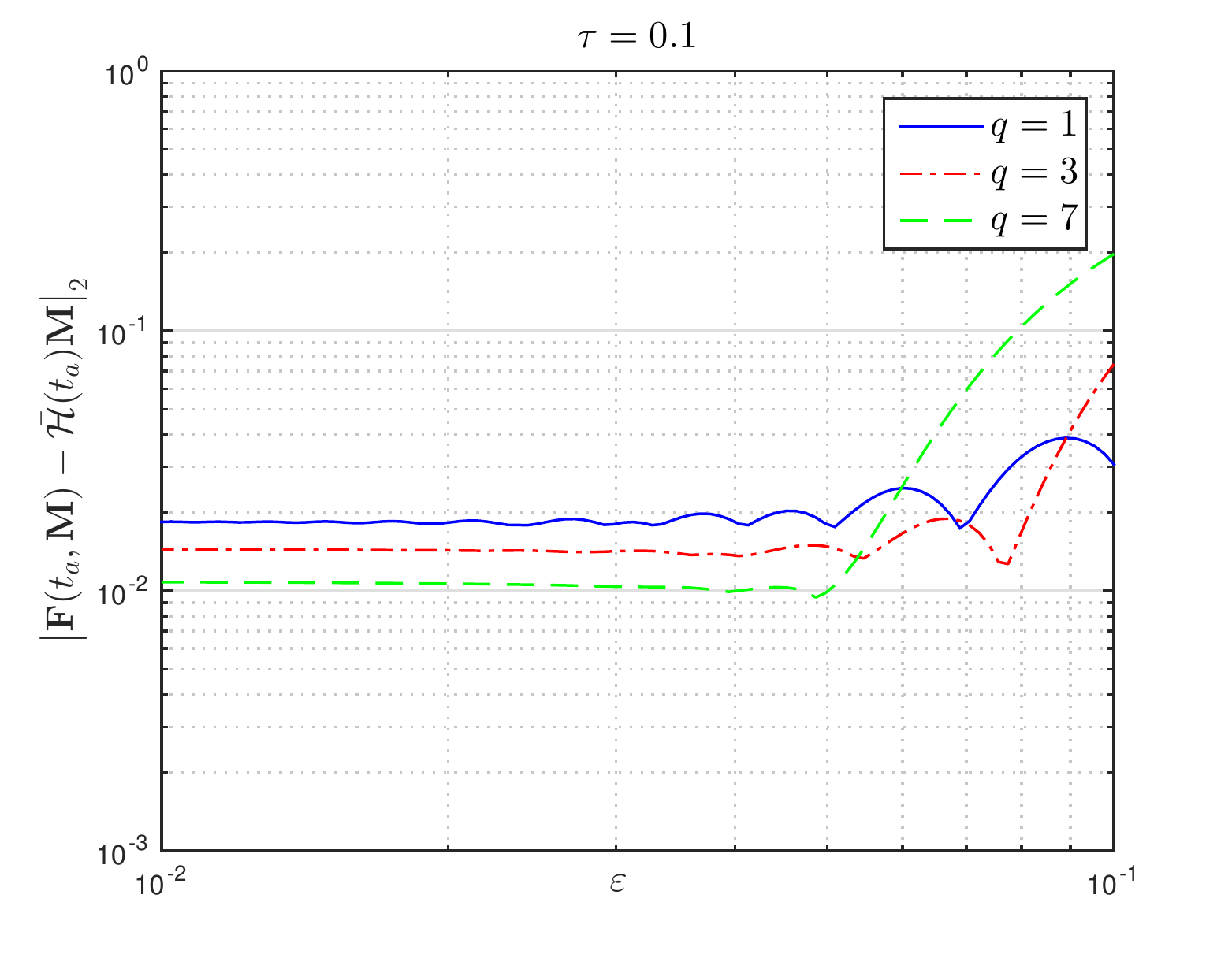}
        \label{fig:gull}
        \includegraphics[width=0.48\textwidth]{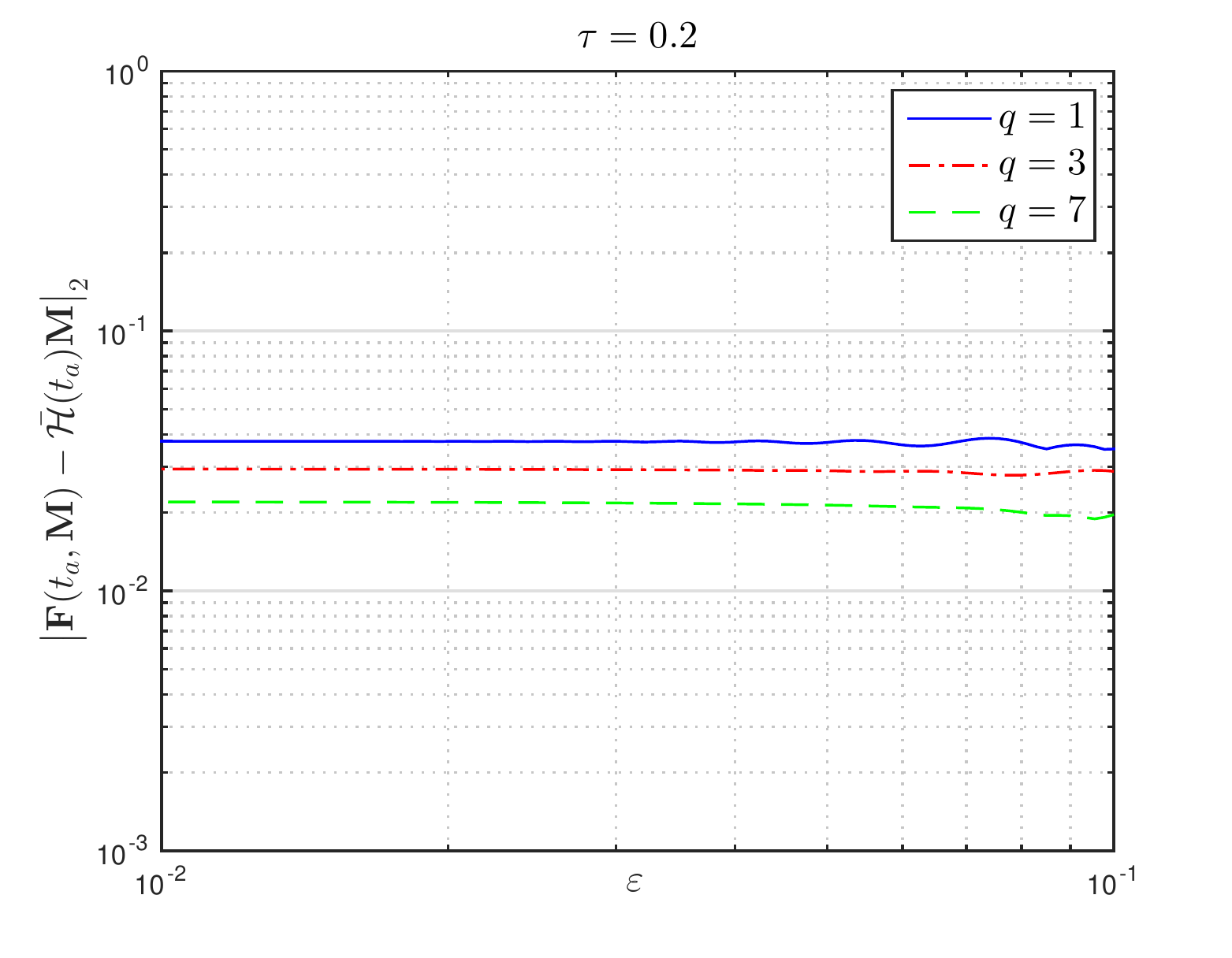}
    \caption{The upscaling error from Theorem \ref{Upscaling_Estimate_Thm} is shown. The error, $|\bF - \bar{\bCalH}(t_a) \bM |_2$, as $\e \longrightarrow 0$ is depicted for (Left)  $\tau = 0.1$, (Right) $\tau = 0.2$. The results verify the estimate in the theorem.}
    \label{Fig_FConv_FixedTau}
\end{figure}

\begin{figure}[h]
  \centering
    \includegraphics[width=0.48\textwidth]{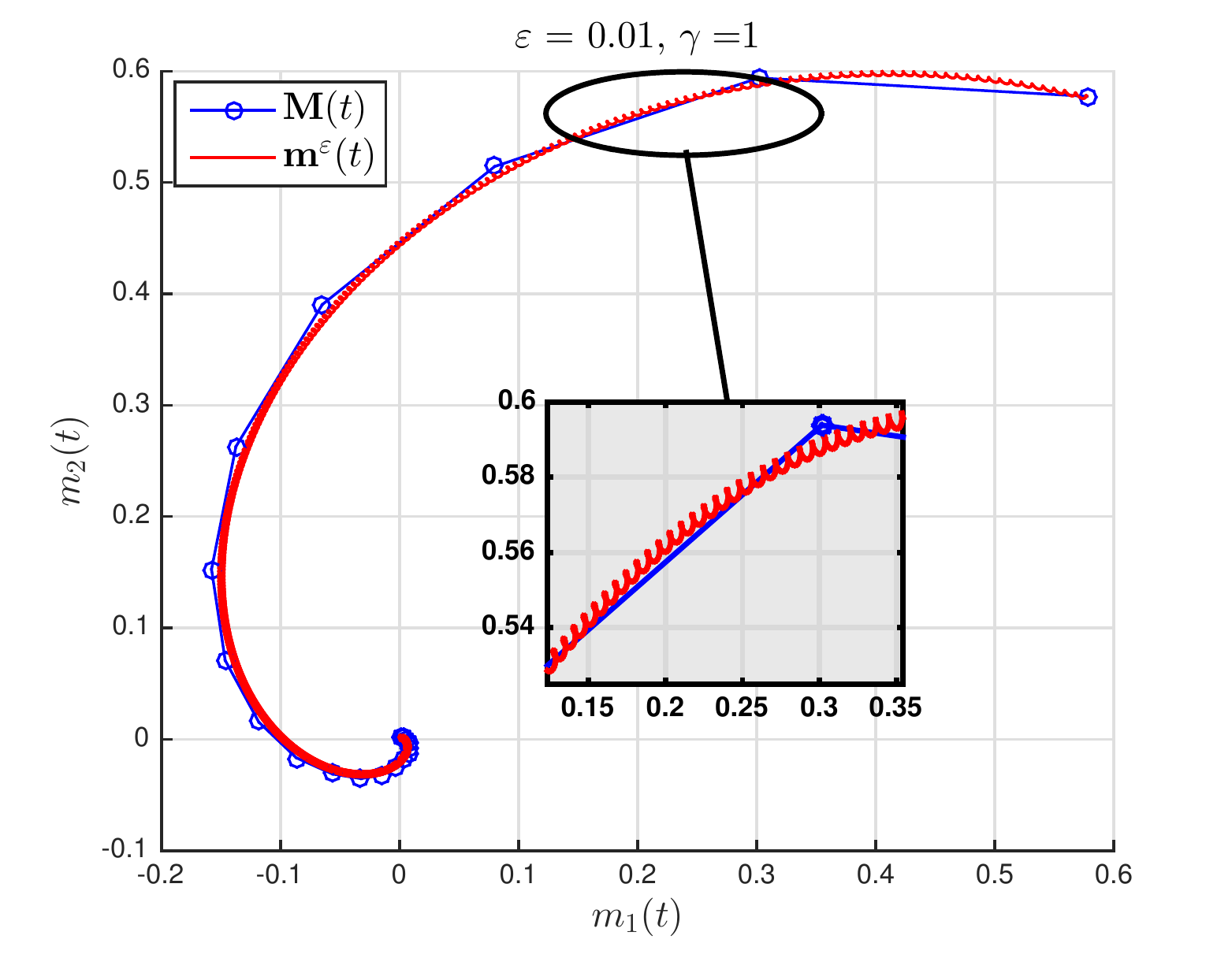}
    \includegraphics[width=0.48\textwidth]{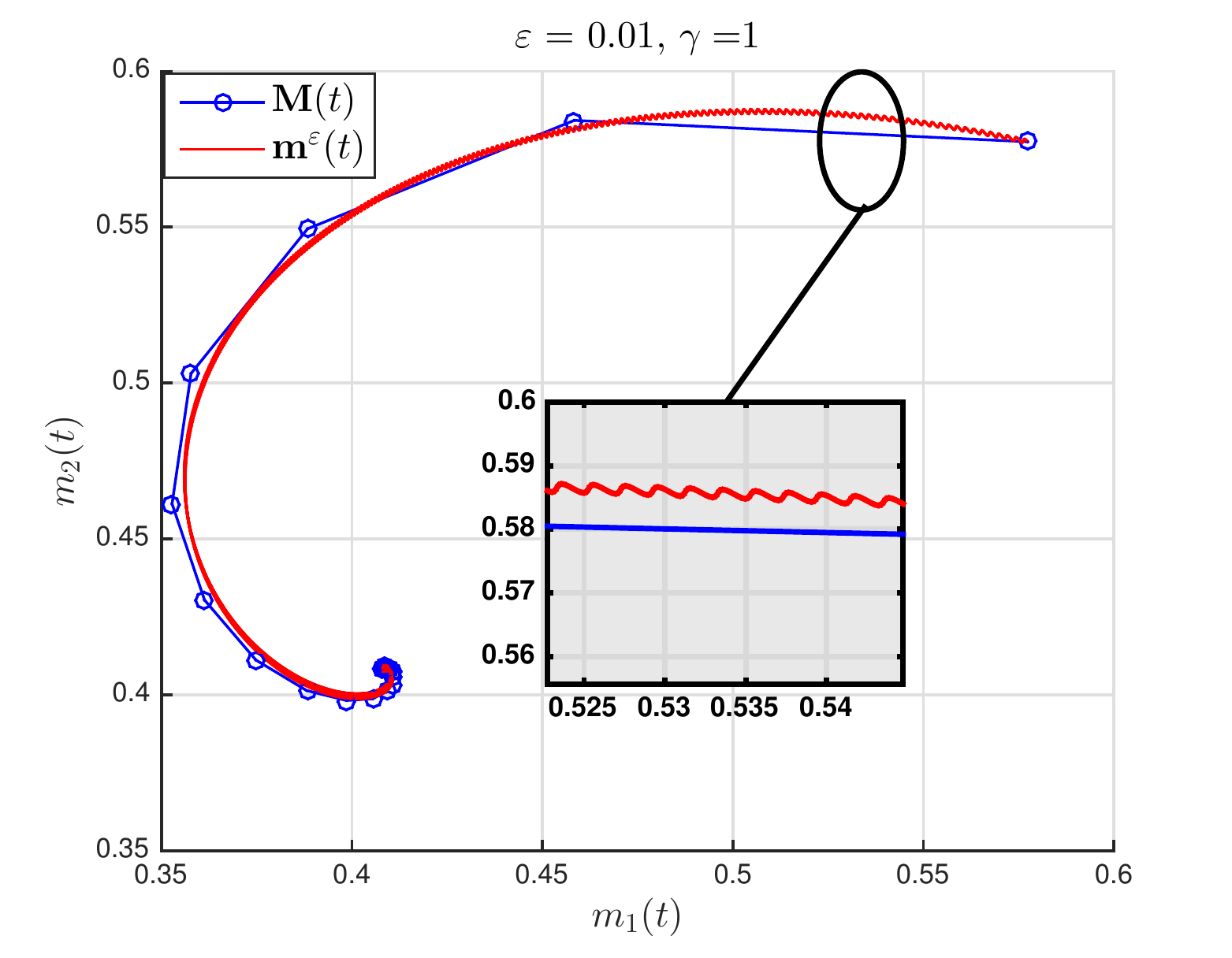}
      \includegraphics[width=0.48\textwidth]{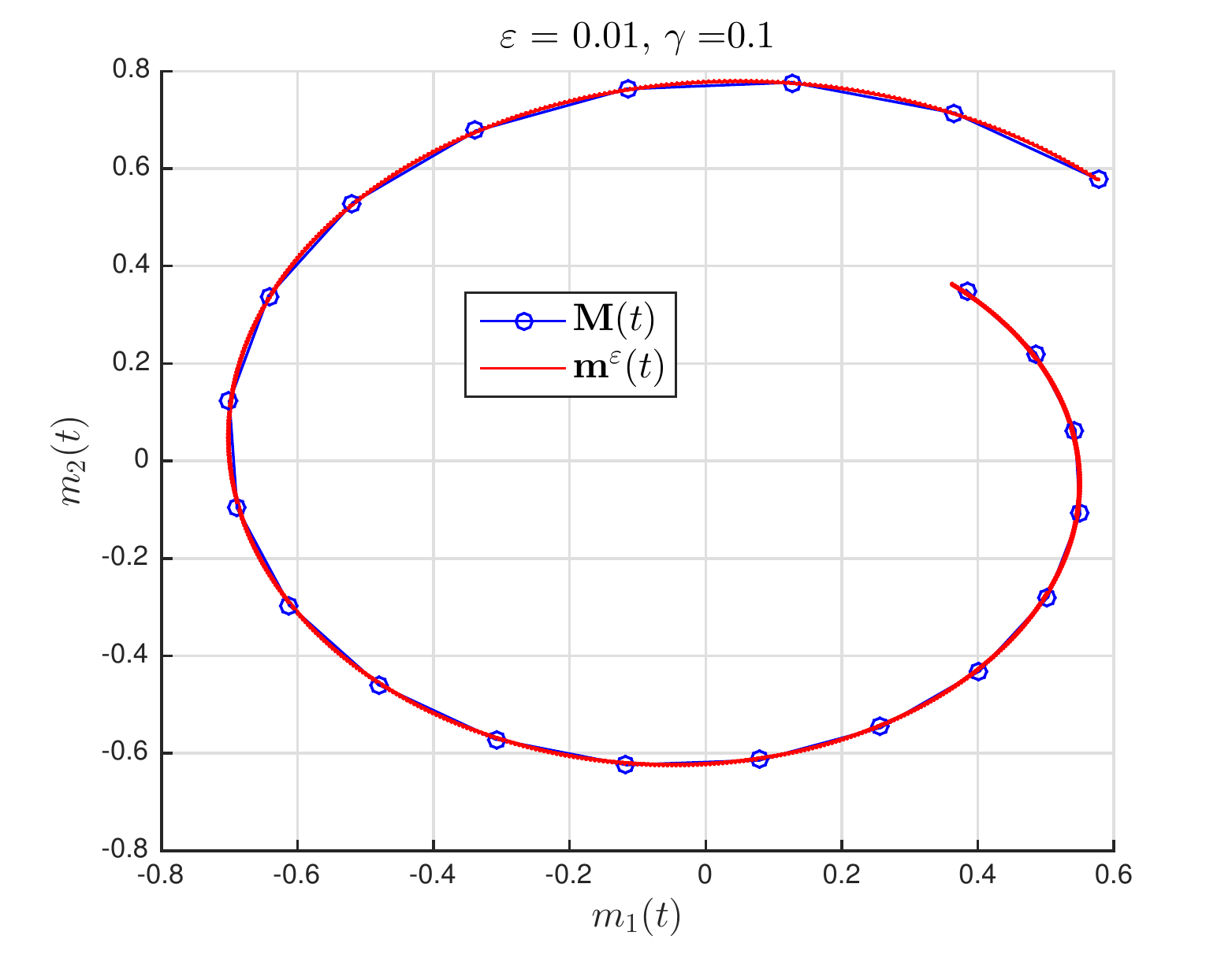}

  \caption{The HMM solution $\bM$ with $\bH(t,t/\e) = \bH_0 + \bH_1(t/\e)$, where $\bH_0 = [0,0,1]^T$ and (Left top) $\bH_1(t/\e) = [\sin(2 \pi t/\e), \cos(2 \pi t/\e),0]^T$ (Right top) $\bH_1(t/\e) = [\sin(2 \pi t/\e)^2, \cos(2 \pi t/\e)^2,0]^T$, for $\e = 0.01$ is compared to the exact solution $\bm^{\e}$. We have used $\beta = 1$, $\gamma=1$, $\tau = 5 \e$, $T = 2 \pi$, $\triangle t = T/20$ and a kernel $K \in \mathbb{K}^{p,q}$ with $p=5, q=4$ in the simulations. The average behaviour of the fine scale solution is recovered by using only $20$ points on the macroscopic grid. (Bottom) The HMM solution $\bM$ with $\bH_1(t/\e) = [\sin(2 \pi t/\e), \cos(2 \pi t/\e),0]^T$, and $\gamma=0.1$. All other parameters are chosen the same as in the top simulations.}
    \label{Fig_HMMvsFullSolutionDet}
\end{figure}

\begin{figure}[h]
    \centering
      \includegraphics[width=0.48\textwidth]{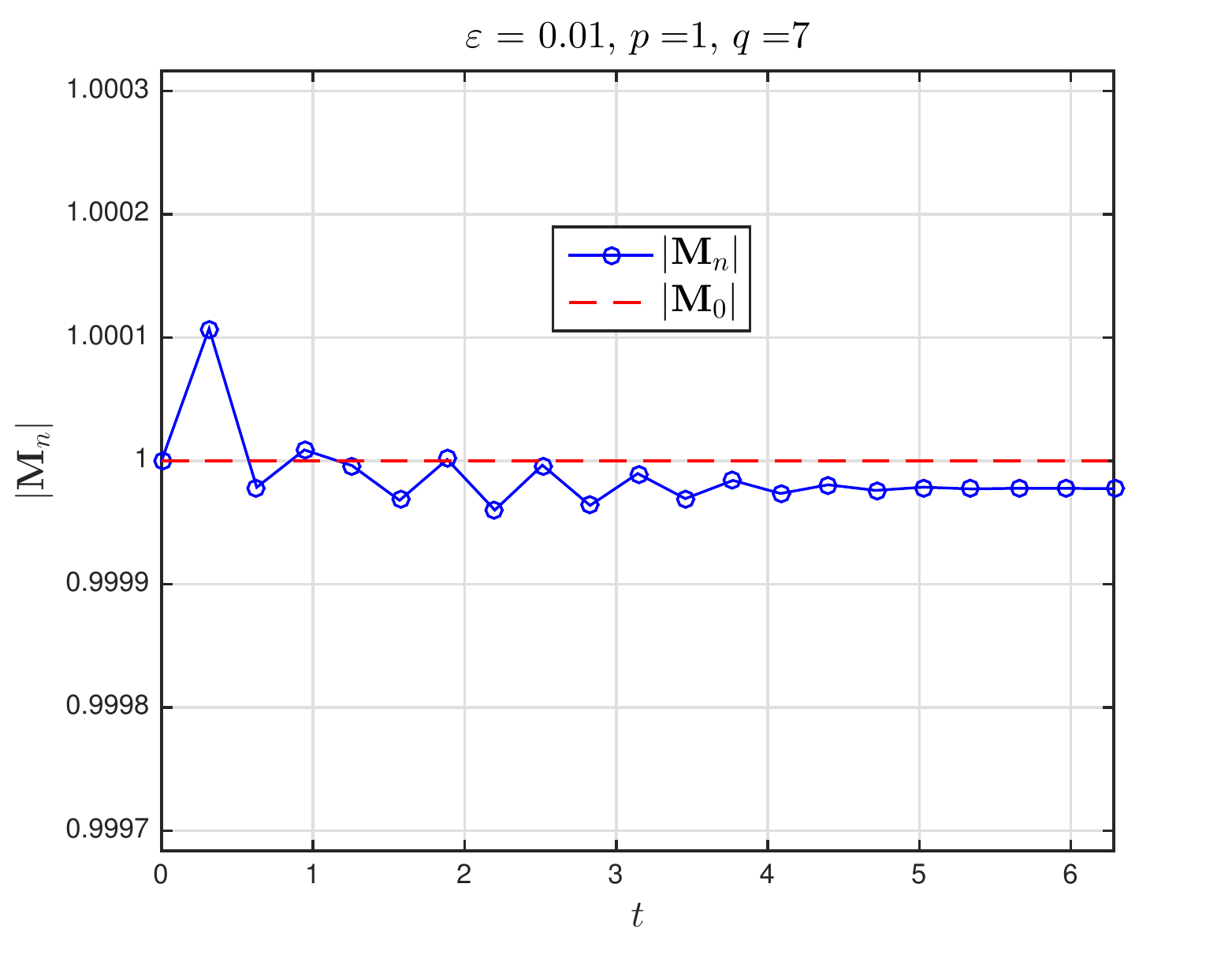}
        \includegraphics[width=0.48\textwidth]{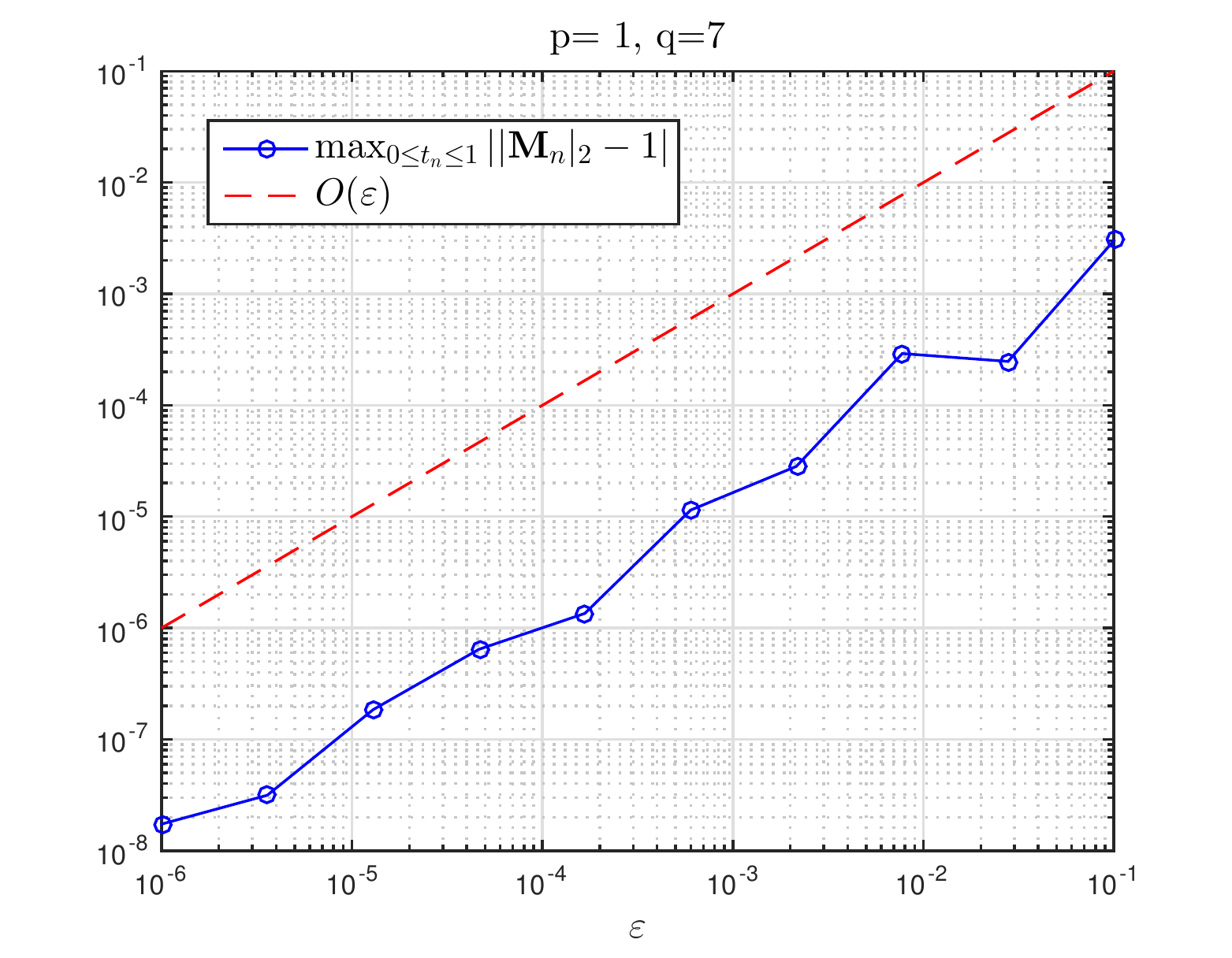}
     \caption{The magnetisation amplitude $|\bM_n|_{n=0}^{N}$ with $H(t,t/\e) = \bH_0  + \bH_1(t/\e)$, where $\bH_0 = [0,0,1]^T$ and $\bH_1(t) = [\sin(2\pi t), \cos(2 \pi t), 0]^T$ is computed using a kernel $K \in \mathbb{K}^{p,q}$ with $p=1$ and $q=7$. (Left) The evolution of the magnetisation amplitude is illustrated for $\e = 0.01$. The HMM does not conserve the initial length. (Right)  The convergence of the magnetisation length to unit length, as $\e \longrightarrow 0$, for $\tau = 5.3 \e$ is depicted. The $O(\e)$ convergence rate is in a good agreement with the estimate provided in Remark \ref{Magnetisation_Amplitude}. The error $O((\e/\tau)^{q+2})$ is not seen in this plot as $q$ is chosen sufficiently large. }
        \label{Fig_MagnetisationLength}
\end{figure}

\subsection{Numerical results for a chain of magnetic particles}
In this subsection, the algorithm described in Subsection \ref{Sec:HMMChain} is applied to a test case. The full microscopic problem contains $N=100$ particles, with the initial configuration
$$
\bm^{\e}_{i}  = [\sin(2\pi x_i), \cos(2 \pi x_i),0]^T, \quad i = 1, \ldots, N,
$$
where $x_i = i \delta x$, and $\delta x = 0.01$. An external field of the form 
$$
\tilde{\bH}^{\e}(t) = (1 + \cos(0.43 t) + \cos(2\pi t/\e)^2) [0,0,1]^T
$$
is applied to all the particles. The physical parameters are chosen as $\beta = 1$  and $\gamma = 1$. The HMM solution is computed with a macroscopic timestep $\triangle t = T/20$, for a final simulation time $T=1.5$, and a spatial stepsize $\triangle x = 0.1$ is used over the macroscopic simulation box $[0,1]$. The micro problem in the HMM uses $11$ particles ($r = 5$), a microscopic time of the size $\tau = 5 \e$, where $\e = 0.01$, and a kernel $K$ with $p=5$ and $q=4$. Figure \ref{Fig_ChainSimulations} shows the evolution of the HMM solution versus a direct numerical simulation (DNS) from $t=0$ to $t=T$ for four different time instants. Both solutions converge to the right steady state value (aligned in the direction of the external field). Moreover, the HMM solution captures the correct dynamics using fewer points in time and space. 

\begin{figure}[h]
    \centering
      \includegraphics[width=0.48\textwidth]{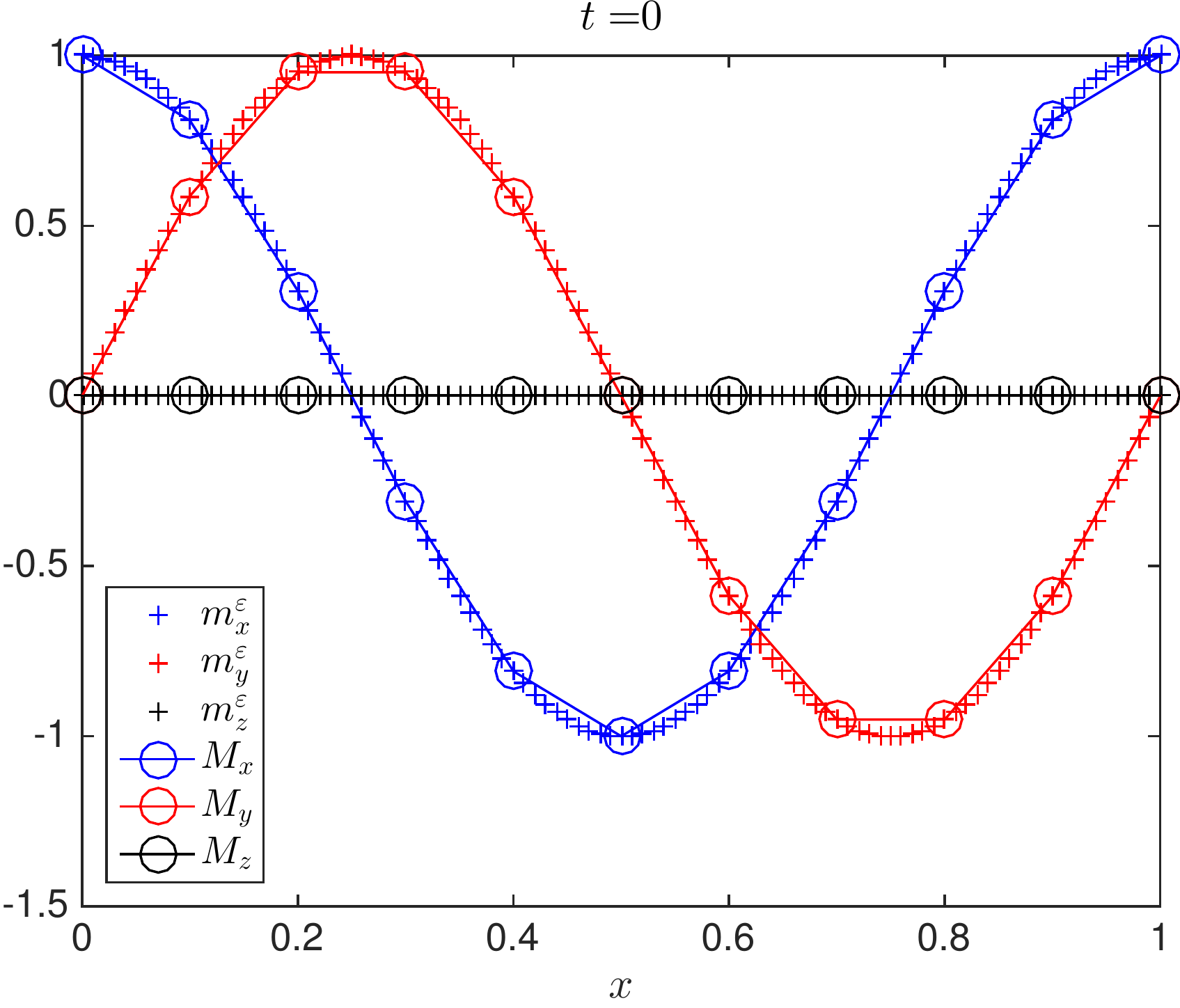}
        \includegraphics[width=0.48\textwidth]{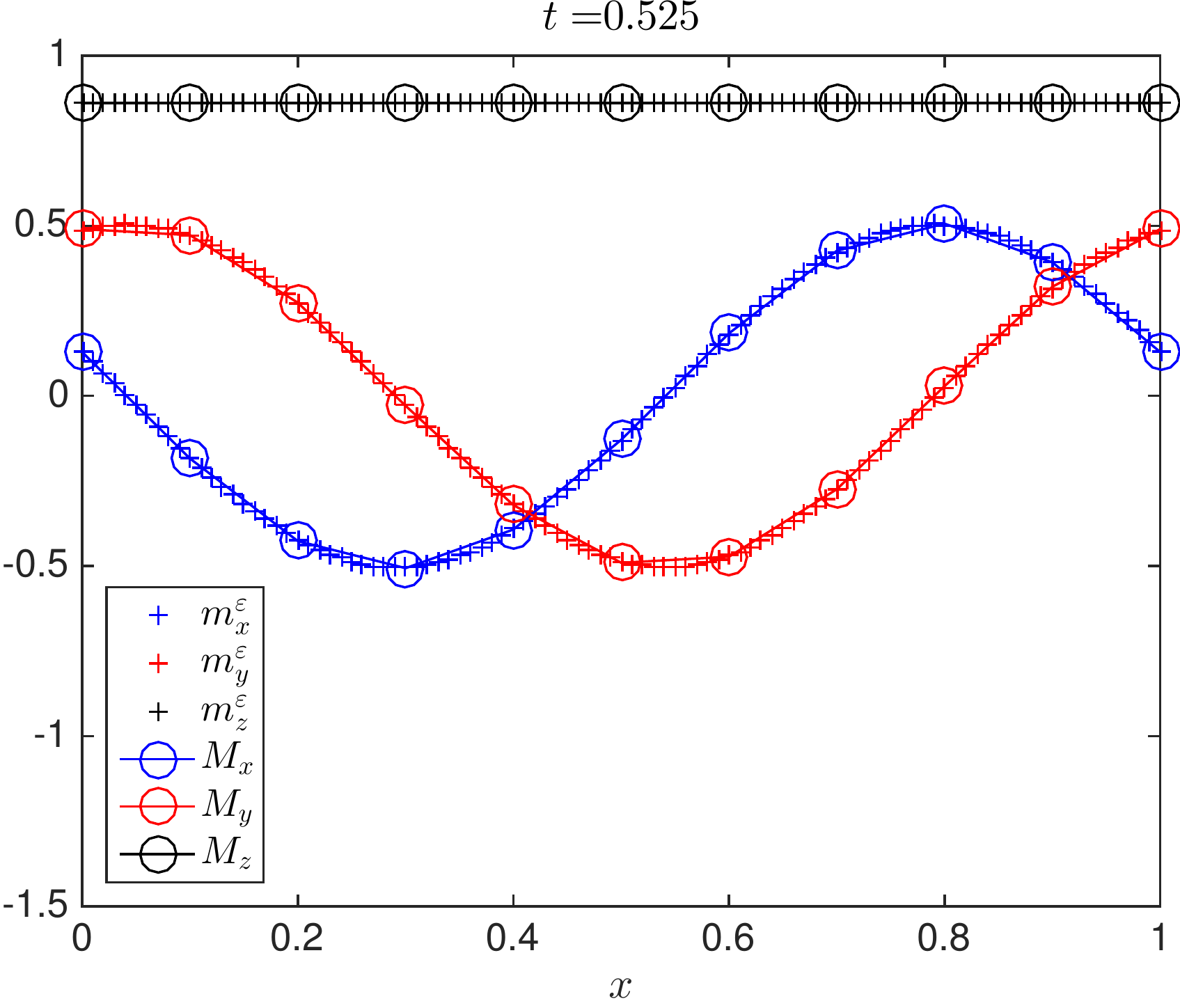}
                \includegraphics[width=0.48\textwidth]{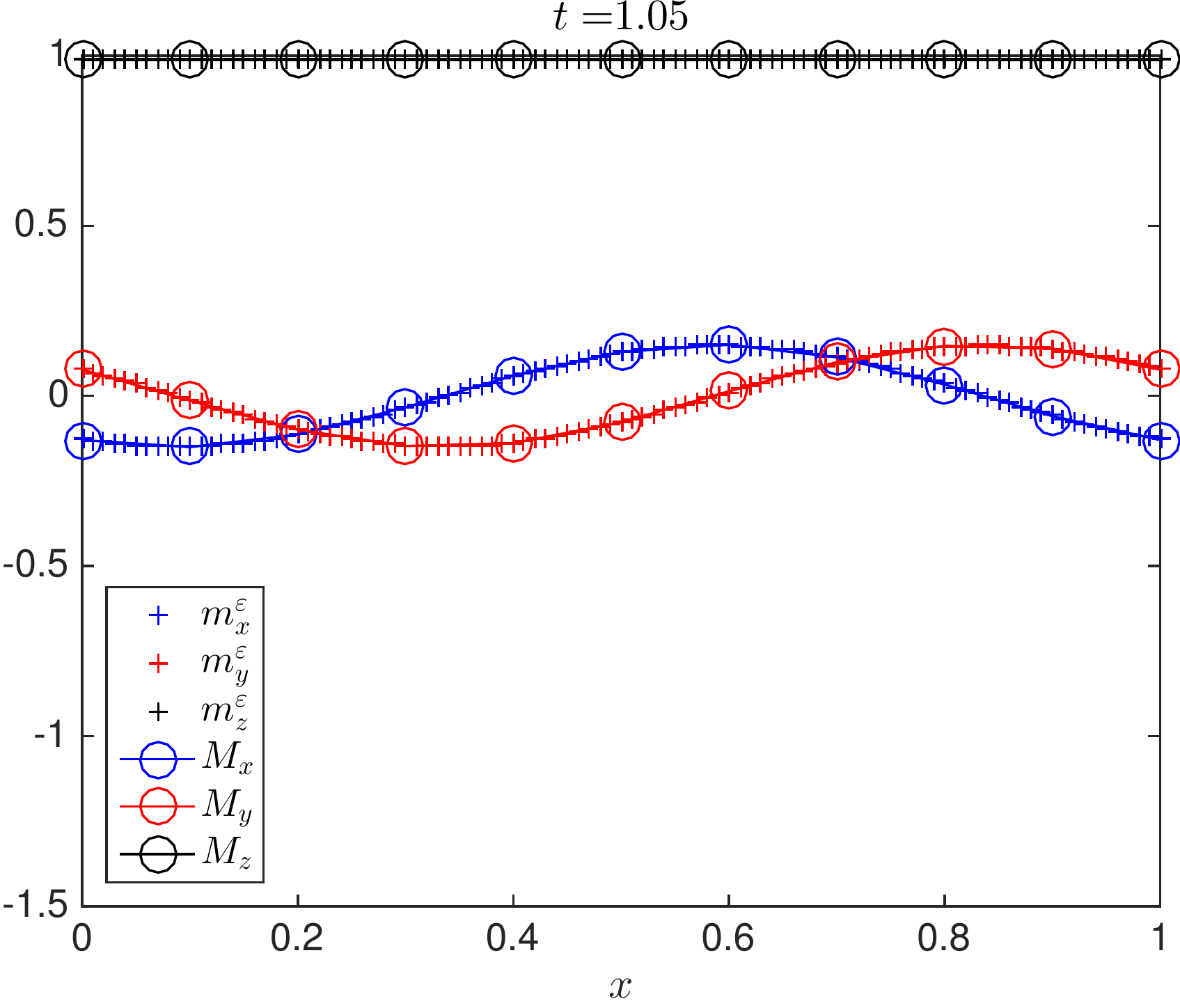}
        \includegraphics[width=0.48\textwidth]{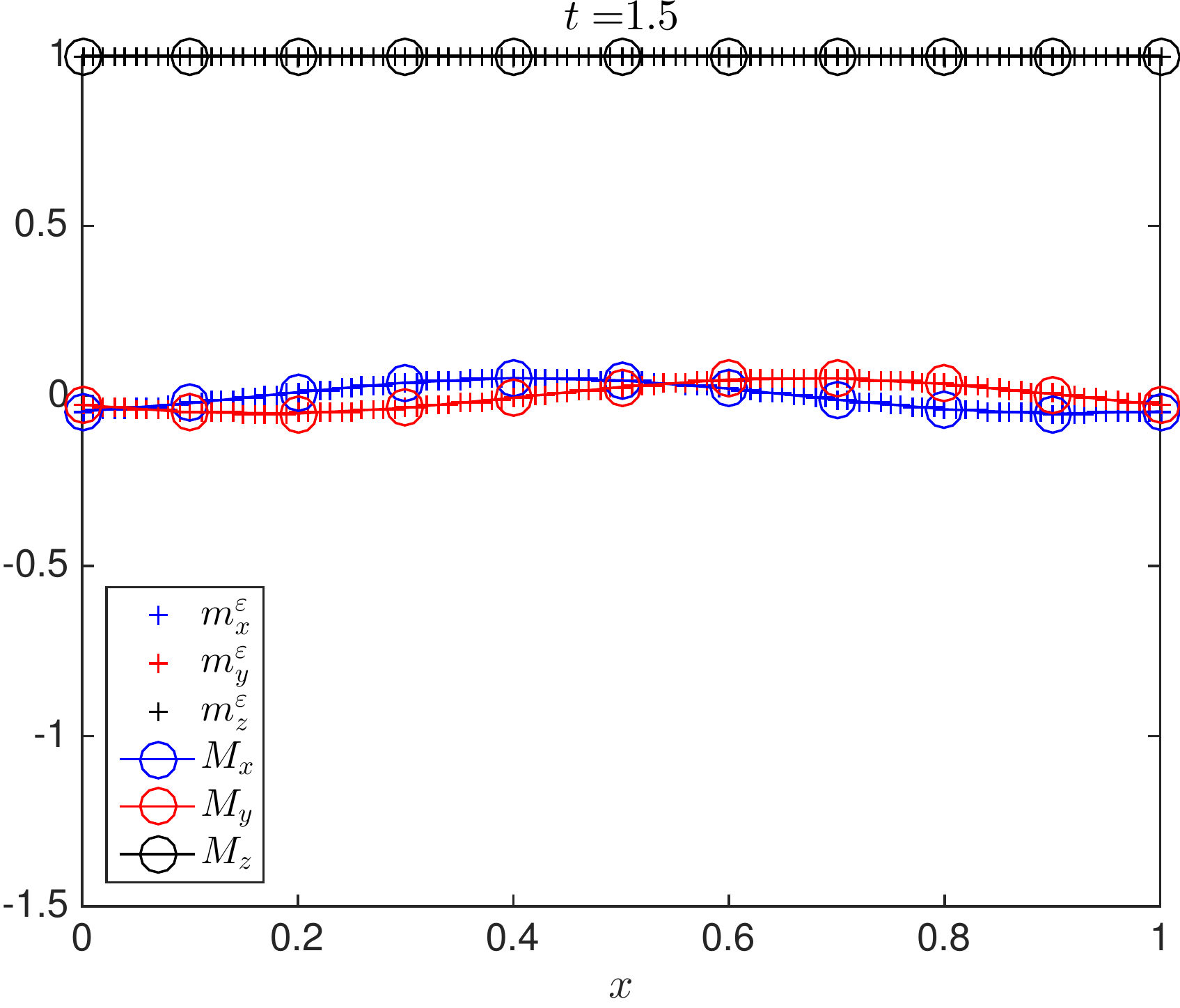}
     \caption{HMM solutions (in circles) with $10$ macroscopic points are compared to a direct numerical simulation with $100$ particles. At $t=0$, the particles are aligned non-uniformly in the $x$ and $y$ directions. An external field is applied in the $z$ direction. As time evolves, both the HMM and DNS solutions converge to the steady state, which is in the $z$ direction. The HMM is observed to capture the correct dynamics using fewer points in time and space.}
        \label{Fig_ChainSimulations}
\end{figure}

\clearpage


\section{Discussion}
We have proposed and analysed a multiscale method based on the HMM framework for coupling disparate scales in the Landau-Lifshitz equations of micromagnetism. The current study serves as a first step in designing general upscaling algorithms for incorporating effects of microscopic variations into smoothly varying macroscopic dynamics. 

For the analysis, we have considered a simple model problem where the microscopic model is driven by a high frequency field. We have shown that the HMM captures the right effective quantities by proving an error estimate between the HMM and the exact averaged dynamics.  Two important outcomes of the analysis are: (i) The error between the HMM and the exact dynamics consist of discretisation errors in the micro and macro levels and the upscaling error which can be pushed down to $O(\e)$, where $\e \ll 1$ is a typical lengthscale representing the ratio between the fine and the coarse scales, (ii) Unlike one-scale methods such as the implicit mid-point rule, the multiscale algorithms would, in general, suffer from non-constant length of the magnetisation. However this error can be controlled upon choosing an appropriate high-order micro solver along with a suitable choice for the size of the local averaging domain, see Remark \ref{Magnetisation_Amplitude}. Alternatively, a projection step could be added at each time iteration to project the magnetisation to the unit sphere, which is a common practice in micro-magnetic simulations, see e.g. \cite{E_Wang,Lewis_Nigam}. The extension of the method to a chain of magnetic particles is also presented. The algorithm uses upscaling both in time and space, and it is shown  (via a numerical example) that the method captures the evolution of the correct magnetisation dynamics. 

We believe that the results in the present article give a good insight
regarding the nature of the errors that may appear in more general
settings where the effective field contains influences of spatial
variations as well.  The proposed method can potentially be used to
approximate macroscopic effects that may arise in different physical
problems such as spin waves.

\bibliographystyle{spmpsci}
\bibliography{dek}

\end{document}